\documentclass[11pt,reqno]{article} 
\usepackage{amsmath,amssymb,amsthm,mathrsfs}
\newtheorem{theorem}{Theorem}[section]

\newtheorem{corollary}[theorem]{Corollary}

 \theoremstyle{definition}

\newtheorem{remark}[theorem]{Remark}
\usepackage[english]{babel}
\usepackage[utf8]{inputenc}
\usepackage{color}
\usepackage{graphicx}
\usepackage[left=3cm,right=3cm]{geometry}

\def\r{\mathbb R}
\def\s{\mathbb S}
\def\S{\mathsf S}
\def\H{\mathsf H}
\def\P{\mathsf P}
\def\E{\mathsf E}
\def\ee{\mathcal E}
\def\h{\mathbb H}
\def\n{\mathbb N}
\def\z{\mathbb Z}
\def\q{\mathbf q}
\def\e{\mathbf e}

\title{On the stability of Killing cylinders in hyperbolic space}
\author{Antonio Bueno\\
Departamento de Ciencias\\  Centro Universitario de la Defensa de San Javier\\
30729 Santiago de la Ribera, Spain\\
email: antonio.bueno@cud.upct.es\\
\vspace*{.1cm}\\
Rafael L\'opez\\
Departamento de Geometr\'{\i}a y Topolog\'{\i}a\\
 Universidad de Granada. Granada, Spain\\
email: rcamino@ugr.es}
\date{}
 \begin{document}

\maketitle
\begin{abstract}In this paper we study the stability of a Killing cylinder in hyperbolic 3-space when regarded as a capillary surface for the partitioning problem. In contrast with the Euclidean case, we consider a variety of totally umbilical support surfaces, including horospheres, totally geodesic planes, equidistant surfaces and round spheres. In all of them, we explicitly compute the Morse index of the corresponding   eigenvalue problem for the Jacobi operator.  We also address the stability of compact pieces of Killing cylinders with Dirichlet boundary conditions when the boundary is formed by two fixed circles, exhibiting an analogous to the Plateau-Rayleigh instability criterion for Killing cylinders in the Euclidean space. Finally, we prove that the Delaunay surfaces can be obtained by bifurcating Killing cylinders supported on geodesic planes.  
 \end{abstract}
Keywords: hyperbolic space, Killing cylinder,  Plateau-Rayleigh instability, bifurcation.\\
MSC: 53A10, 49Q10, 53C42, 76B45.

\section{Introduction to the problem}\label{s1}

In this paper we investigate the stability of surfaces that are solutions  for the partitioning problem in the hyperbolic 3-space $\h^3$. The general setting concerning the partitioning problem is analogous to the Euclidean case \cite{ni}, as depicted next: see Fig. \ref{figpartitioning}. Let $W$ be a bounded domain of $\h^3$  with smooth boundary $\partial W$. We are interested in surfaces $\Sigma$ in $\h^3$ with non-empty boundary such that $\mathrm{int}(\Sigma)\subset\mathrm{int}(W)$ and $\partial\Sigma\subset\partial W$. These surfaces are critical points of the area functional among all   surfaces under these conditions that separate $W$ in two domains of prescribed volumes. Any such critical point of this variational problem is called a {\it capillary surface}. A capillary surface $\Sigma$     is characterized by two properties. First, the   mean curvature $H$ of $\Sigma$ is constant. We abbreviate by saying a cmc surface. The second property is  that the contact angle $\gamma$ that makes $\Sigma$ with $\partial W$ along $\partial\Sigma$  is constant.  A surface that is a minimizer up to second order of this problem is said \emph{stable}. If $\Sigma$  is not compact in the partitioning problem, we restrict ourselves to critical points of the area functional with respect to compactly supported volume-preserving variations. 
 
 This paper is motivated, among others, by the pioneering work of   Souam \cite{so} on the study of capillary surfaces in balls of space forms. He proved that a capillary disk  must be a totally geodesic disk or a spherical cap. Also,  if the surface is stable and of genus zero, then it is totally umbilical. However, although the hyperbolic space is a space form, the literature on capillary surfaces in $\h^3$ is limited. Recently, it has been proved that a stable   capillary  surface in a ball \cite{wx}, and more generally in the domain determined by a horosphere \cite{gwx}   is totally umbilical. Other results regarding the isoperimetric problem, free boundary capillary surfaces and symmetry results appear in  \cite{csp,lx,lo1}. 

The partitioning problem in hyperbolic space is richer than in Euclidean space. For example,  besides of totally geodesic planes and geodesic spheres, the umbilical surfaces of $\h^3$ also are equidistant surfaces and horospheres. Moreover, the behavior of cmc surfaces in $\h^3$ is   different than in Euclidean space: cmc surfaces in $\h^3$ with $|H|>1$ find their counterpart in the non-zero cmc surfaces of $\r^3$; cmc surfaces in $\h^3$ with $|H|=1$ are locally isometric to minimal surfaces in $\r^3$; however,   cmc surfaces of $\h^3$ with $0<|H|<1$ have no analogies in Euclidean space.

Besides umbilical surfaces, as far as the authors know, there are no works on explicit examples of capillary surfaces in $\h^3$, even in the case that the support surface is a ball. This contrasts with the great literature on capillary surfaces in a ball of the Euclidean space, initiated with Nitsche \cite{ni}, then with Ros and Vergasta  \cite{rv} and more recently with Fraser and Schoen    \cite{fs}.

The purpose of this paper is to investigate the stability of  Killing cylinders  as capillary surfaces supported on different umbilical surfaces, and eventually to deduce their stability. This is somehow inspired by the large amount of results regarding the stability of circular cylinders in the Euclidean context, since they are simultaneously one of the simplest and more interesting examples of cmc surfaces. Results on stability of cylinders in Euclidean space depending on the  type of support appear in \cite{fr,lo2,lo22,lo3,vo01,vo44,vo5}.  A \emph{Killing cylinder} in $\h^3$  is  defined as the point-set obtained by the movement of a circle by hyperbolic translations of $\h^3$, as a surface of revolution generated by an equidistant line or as  the point-set of equidistant points from a given geodesic, all being equivalent.

Next we detail the organization of the paper and highlight some of the main results. We begin in Sect.  \ref{s2} by setting the notation and basic definitions on the stability of capillary surfaces and define the Morse index. In Sect. \ref{s3} we  introduce  a notation for umbilical surfaces that will be useful throughout this manuscript. As a first result, we prove  that    a horosphere is strongly stable as a capillary surface between parallel planes and between symmetric equidistant surfaces (Thm. \ref{t2}). 

In Sect. \ref{s4} we start to investigate the stability of Killing cylinders. First, we extend to the hyperbolic setting the analogous to the Plateau-Rayleigh instability criterion of cylinders in the Euclidean space \cite{pl,ra}. For that matter, we consider bounded pieces of Killing cylinders of radius $R$ bounded by two circles at distance $T$. Our first result shows that if $T>2\pi\sinh R$, then the Killing cylinder is not stable (Thm. \ref{t3}). We also show this result when the Killing cylinder is regarded as a capillary surface between two parallel totally geodesic planes, and between two parallel horospheres (Thm. \ref{t4}).

In Sect. \ref{sb} we consider a Killing cylinder inside a ball of $\h^3$ and exhibit in Thm. \ref{t-ib} the explicit computation of the Morse index. We show that the Morse index of a Killing cylinder approaches to $1$ as the Killing cylinder tends to the critical position of being tangent to the ball, and then grows to $\infty$ when the radius of the cylinder goes to $0$. 

Following this approach,  in Sect. \ref{s5} we extend this study in case that the Killing cylinder is included in the unbounded domain determined by a horosphere, a totally geodesic plane and an equidistant surface, and its boundary is supported in these supports: Thms. \ref{t6}, \ref{t7} and \ref{t8}.  In all these three cases, the Killing cylinder is not stable. In fact we will derive that the index of these truncated Killing cylinders goes to $\infty$ as the lengths of the   cylinders go to infinity. Finally, in Sect.  \ref{s6} we use the Crandall and Rabinowitz's bifurcation by simple eigenvalues to show that  the Delaunay surfaces of $\h^3$  can be obtained via bifurcating Killing cylinders (Thm. \ref{t-b}).

\section{Stability of capillary surfaces}\label{s2}

In this section we introduce the concept of the Morse index of a capillary surface. In what follows, we adopt \cite{rs,so} in our context.  Fix a domain $W\subset\h^3$ with smooth boundary $\partial W$, let $\Sigma$ be an orientable smooth surface with non empty boundary $\partial\Sigma$ and $\Psi\colon\Sigma\to W$ an immersion which is smooth on the interior of $\Sigma$ and of class $C^2$ up to $\partial\Sigma$. The immersion $\Psi$ is said to be  \emph{admissible} if   $\Psi(\text{int}(\Sigma))\subset\text{int}(W)$ and $\Psi(\partial\Sigma)\subset \partial W$. Suppose that $\Psi(\mbox{int}(\Sigma))$ separates $W$ in two connected components, each having as boundary the union of $\Psi(\mbox{int}(\Sigma))$ and a domain on $\partial W$. Fix one of these components, say  $D$, which it is assumed to be bounded and let $\Omega=\partial D\cap \partial W$. Let $N$ be the unit normal vector field on $\Sigma$ along $\Psi$ pointing into $D$, and let $\overline{N}$ be the unit normal of $\Omega$ pointing outwards $D$. Finally, denote by $\nu$ the exterior unit conormal vector to $\partial\Sigma$ in $\Sigma$ and by $\overline{\nu}$ the exterior unit conormal vector to $\partial\Sigma$ in $\Omega$.  See Fig. \ref{figpartitioning}.

\begin{figure}[h]
\begin{center}
\includegraphics[width=.4\textwidth]{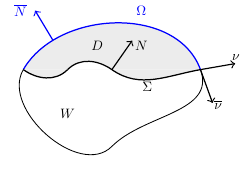}
\caption{The partitioning problem and the vector fields involved in its definition.}
\label{figpartitioning}
\end{center}
\end{figure}

An {\it admissible variation} of $\Psi$ is a smooth map $\Phi:\Sigma\times (-\epsilon,\epsilon)\to \h^3$ such that $\Phi(p,0)=\Psi(p)$ and the maps $\Phi_t\colon p\mapsto\Phi(p,t)$ are   admissible immersions of $\Sigma$ in $W$ for all $|t|<\epsilon$. For a fixed $t$, define $\Sigma_t=\Phi_t( \Sigma)$ and $\Omega(t)$ the domain bounded by $\partial\Sigma_t$ in $\partial W$. We will assume that all variations have compact support, that is, for each $t$ there is a compact set $K\subset \Sigma$ such that $\Phi_t(p)=\Psi(p)$ for all $p\in\Sigma\setminus K$. 
 
Given $\gamma\in (0,\pi)$, define the energy functional $E\colon (-\epsilon,\epsilon)\to\r$ given by 
$$E(t)=|\Sigma_t|-\cos\gamma\, |\Omega(t)|,$$
where $|\Sigma_t|$ is the area of $\Sigma$ induced by $\Phi_t$ and $|\Omega(t)|$ is the area of $\Omega(t)$. Let $V(t)$ be the volume of the domain determined by $\Phi_t$ in $\h^3$,
$$
V(t)=\int_{[0.t]}|\mbox{Jac}\,\Phi|dV,
$$
being $dV$ the volume element in $W$. The variation is said to preserve the volume if $V(t)=V(0)$ for every $t$. The formulas of the first variation of $E(t)$ and $V(t)$ are, respectively, 
$$E'(0)=-2\int_\Sigma H u +\int_{\partial \Sigma} \langle \nu-\cos\gamma\, \bar\nu,\xi\rangle \,,\quad 
V'(0)=\int_{\Sigma} u  ,$$
where $H$ is the mean curvature of the immersion $\Psi$,  $\xi$ is the variation vector field of $\Phi$ and $u= \langle   N,\xi\rangle$. As a consequence, $\Psi$ is a critical point of $E$ for all compactly supported volume-preserving admissible variations  if and only if $H$ is constant and the angle between    $\Sigma$ and $\partial W$  is $\gamma$ along $\partial\Sigma$. Here $\cos\gamma=\langle N,\overline{N}\rangle=\langle\nu,\overline{\nu}\rangle$. We say then that $\Psi$ is a    {\it capillary surface}   in $W$.  Standard arguments prove that for any smooth function $u$ on $\Sigma$ such that $\int_\Sigma u=0$, there is an admissible variation of $\Psi$ preserving the volume with $u=\langle N,\xi\rangle$ \cite{as,bce}. In the above setting, we can also assume that $\partial W$ is formed by different components $\partial W_1,\ldots,\partial W_k$. In such a case,   the components of   $\partial\Sigma$ are included in  some $\partial W_i$, $1\leq i\leq k$. Then, the energy $E$ is modified adding different terms of type $\cos\gamma_i |\Omega_i(t)|$, being $\Omega_i(t)=\partial D\cap\partial W_i$.  Consequently, a critical point satisfies that the angle $\gamma_i$ that makes $\Sigma$ along each one its boundary components with $\partial W_i$ is constant,  $1\leq i\leq k$.

For all compactly supported volume-preserving admissible variations  of $\Psi$, the second variation of $E$ is 
 \begin{equation}\label{e0}
  E''(0)=  -\int_{\Sigma} u\left(\Delta u +(|A|^2-2) u\right)\, + \int_{\partial\Sigma} 
 u\left( \frac{\partial u}{\partial \nu} - {\bf q}\, u\right)\,  ,
 \end{equation}
where $\Delta$ is the Laplacian for the metric induced by $\Psi$ on $\Sigma$, $A$ is the second fundamental form of $\Psi$, $\frac{\partial u}{\partial\nu}=\langle\nabla u,\nu\rangle$ is the conormal derivative of $u$ in the direction of $\nu$ and  
\begin{equation}\label{qq}
 {\bf q}= \frac{1}{\sin\gamma} \overline{A} (\bar\nu,\bar\nu)+ \cot\gamma A(\nu,\nu). 
\end{equation}
Here, $\overline{A}$ is the second fundamental form of $\partial W$ with respect to $-\overline{N}$.   A capillary surface is \emph{stable} if $E''(0)\geq0$ for every admissible variation preserving the volume. If we drop the volume-preserving condition, then the capillary surface is said to be \emph{strongly stable}.

 Integrating by parts, the formula \eqref{e0} defines the quadratic form 
\begin{equation}\label{q}
 Q[u] =E^{''}(0)= \int_\Sigma  \left(|\nabla u|^2 -(|A|^2-2) u^2\right)- \int_{\partial\Sigma} 
 \q u^2 \, .
 \end{equation}
 Thus stability is equivalent to $Q[u]\geq 0$ for all functions of the space $\mathcal{M}=\{u\in C^\infty(\Sigma): \int_\Sigma u=0\}$.  A direct consequence of \eqref{q} is the following result. 

\begin{corollary} \label{c1}
Let $\Sigma$ be a capillary surface on $\partial W$. If $|A|^2\leq 2$ and $\q\leq 0$ along $\partial\Sigma$, then $\Sigma$ is stable. 
\end{corollary}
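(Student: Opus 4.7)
The statement is a direct sign-check in the second variation formula \eqref{q}, so my plan is essentially to read off the result from the definition rather than develop any new machinery. The key is that stability is the assertion $Q[u]\geq 0$ for all $u\in\mathcal{M}$, and I intend to show the stronger statement that $Q[u]\geq 0$ for every $u\in C^\infty(\Sigma)$ (which gives strong stability, and in particular stability, since $\mathcal{M}\subset C^\infty(\Sigma)$).

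First I would recall the expression
\[
Q[u] = \int_\Sigma \bigl(|\nabla u|^2-(|A|^2-2)u^2\bigr) - \int_{\partial\Sigma}\q\, u^2.
\]
Then I would examine each summand separately. The term $|\nabla u|^2$ is manifestly non-negative. Under the hypothesis $|A|^2\leq 2$, the coefficient $-(|A|^2-2)=2-|A|^2$ is non-negative, so the bulk integrand is pointwise $\geq 0$ and therefore its integral over $\Sigma$ is non-negative. Under the hypothesis $\q\leq 0$ along $\partial\Sigma$, the boundary integrand $-\q\, u^2$ is also pointwise $\geq 0$, so the boundary integral is non-negative as well.

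Adding these inequalities gives $Q[u]\geq 0$ for every smooth $u$ on $\Sigma$, with no use of the mean-value constraint $\int_\Sigma u=0$. In particular the inequality holds on $\mathcal{M}$, which is exactly the stability condition for the capillary surface $\Sigma$. There is no obstacle to overcome: the proof is a one-line sign inspection, and the observation worth recording is that the hypotheses actually yield strong stability, not merely stability against volume-preserving variations.
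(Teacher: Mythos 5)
Your argument is correct and is exactly the sign inspection the paper intends when it calls the corollary ``a direct consequence of \eqref{q}'': under the hypotheses each of the three contributions to $Q[u]$ is pointwise non-negative, so $Q[u]\geq 0$ for all smooth $u$, giving (strong) stability. Nothing further is needed.
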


If $\Sigma$ is not stable, it is interesting to study the subspace of $\mathcal{M}$ where $Q$ is negative definite. The  {\it weak Morse index} of $\Sigma$, denoted by $\mbox{index}_w(\Sigma)$, is defined as the maximum dimension of any subspace of $\mathcal{M}$ on which $Q$ is negative definite.   The weak Morse index tells us the number of directions whose variations decrease the energy of the surface.  The parenthesis in the first integral of \eqref{e0} defines the Jacobi operator $J= \Delta + |A|^2-2 $. It is known that $J$ is a second-order linear elliptic differential operator. If $\Sigma$ is compact, the weak Morse index of $\Sigma$ coincides with the number of negative eigenvalues $\lambda_w$  of the   eigenvalue problem     
\begin{equation}\label{eq10}
\left\{\begin{split}
Ju+\lambda_w u=0&\mbox{ in}\ \Sigma,\\
\frac{\partial u}{\partial \nu} - \q u=0& \mbox{ in}\  \partial\Sigma,\\
u&\in\mathcal{M}
\end{split}\right.
\end{equation}
The fact that the function $u$ belongs to $\mathcal{M}$ is difficult to manage in general. For this reason, instead of \eqref{eq10}, it is better to consider the eigenvalue problem
\begin{equation}\label{eq1}
\left\{\begin{split}
Ju+\lambda u=0&\mbox{ in}\ \Sigma,\\
\frac{\partial u}{\partial \nu} - \q u=0& \mbox{ in}\  \partial\Sigma,\\
u&\in C^\infty_0(\Sigma).\\
\end{split}\right.
\end{equation}
By the ellipticity of $J$, it is well-know that the eigenvalues of \eqref{eq1} (also $\lambda_w$ of \eqref{eq10}) are ordered as a discrete spectrum  $\lambda_1<\lambda_2 \leq\lambda_3\cdots\nearrow \infty$ counting multiplicity  \cite{be,bgm}. The eigenvalues have associated  an orthonormal basis (in the $L^2$-sense) of eigenfunctions $\{u_n\}_{n\in\n}$.  The {\it Morse index} of $\Sigma$, denoted $\mbox{index}(\Sigma)$, is the number of negative eigenvalues of \eqref{eq1} and the {\it nullity} of $\Sigma$ is the dimension of the subspace associated to the eigenvalue $0$, in case that $0$ is an eigenvalue. If $\mbox{index}(\Sigma)=0$, we say that the surface is {\it strongly stable} and this is equivalent to $Q[u]\geq 0$ regardless the condition that $\int_\Sigma u=0$. Both indices are related by the inequalities   
\begin{equation}\label{ii}
\mbox{index}_w(\Sigma)\leq \mbox{index}(\Sigma)\leq \mbox{index}_w(\Sigma)+1.
\end{equation}
 Some interesting cases are the following:
\begin{enumerate}
\item If $\lambda_1\geq 0$, then    the surface is stable because  $\mbox{index}(\Sigma)=0$.  In other words, strong stability implies stability. 
\item If $\lambda_2<0$. Then the subspace spanned by  two eigenfunctions of $\lambda_1$ and $\lambda_2$ has at least dimension $2$. This implies that there is a function $u\in\mathcal{M}$ with $Q[u]<0$. In particular,   $\Sigma$ is not stable.  One can also argue saying that $2\leq \mbox{index}_w(\Sigma) +1$, so $\mbox{index}_w(\Sigma)\geq 1$.
\end{enumerate}
  For   a relation between the weak Morse index and the Morse index, we refer the reader to \cite{koi,vo12,vo2};  see also  the recent papers \cite{tz1,tz2}.

\begin{remark}\label{res}
 If one considers the stability problem of cmc surfaces with fixed boundary, the quadratic form $Q$  is as in \eqref{q} without the boundary term. In the corresponding eigenvalue problems \eqref{eq10} and \eqref{eq1}, the Robin boundary condition is replaced by $u=0$ on $\partial\Sigma$.
\end{remark}
 
 In case that $\Sigma$ is not compact, then we need to take an exhaustion of the surface, similarly as in the  theory of complete minimal surfaces \cite{fs1}.  If $\Sigma_1\subset\Sigma_2\subset\ldots\subset  \Sigma$ is an exhaustion of $\Sigma$ by bounded subdomains, the weak Morse index and the Morse index of $\Sigma$ are defined by 
 \begin{equation}\label{stable-infinito}
 \mbox{index\,}_w(\Sigma)=\lim_{n\to\infty}\mbox{index\,}_w(\Sigma_n),\quad \mbox{index\,}(\Sigma)=\lim_{n\to\infty}\mbox{index\,}(\Sigma_n).
 \end{equation}
We point out that   for two bounded open subdomains $\Sigma'$, $\Sigma''$ of $\Sigma$, if $\Sigma'\subset\Sigma''$, then $\mbox{index\,}_w(\Sigma')\geq \mbox{index\,}_w(\Sigma'')$ and $\mbox{index\,}(\Sigma')\geq \mbox{index\,}(\Sigma'')$.  Notice that  $\mbox{index\,}_w(\Sigma)$ and $\mbox{index\,}(\Sigma)$  are independent of the choice of the exhaustion of $\Sigma$ \cite{fs}. Both numbers can be infinite, but if they are finite, then the relation \eqref{ii} holds too.

\section{First examples of capillary surfaces}\label{s3}
In this paper we consider the upper half space model of $\h^3$, that is,   $\r^{3}_+=\{(x,y,z)\in\r^{3}\colon z>0\}$, endowed with the metric
$$\langle\cdot,\cdot\rangle=\frac{dx^2+dy^2+dz^2}{z^2}.$$
  We   use the affine notions of vertical and horizontal to say parallel to the $z$-axis or parallel to the $xy$-plane, respectively. Denote by $\{\e_1,\e_2,\e_3\}$ to the canonical basis of $\r^3$. 

Since   the metric $\langle\cdot,\cdot\rangle$ is conformal to the Euclidean metric $\langle\cdot,\cdot\rangle_e$  of $\r^3_{+}$, the Levi-Civita connections  $\nabla$ and $\nabla^e$ of $\h^3$ and $\r^3$, respectively, are related by
\begin{equation}\label{r}
\nabla_XY=\nabla^e_XY-\frac{X_3}{z}Y-\frac{Y_3}{z}X+\frac{\langle X,Y\rangle_e}{z}\e_3,
\end{equation}
for two vector fields $X$ and $Y$. If $\Sigma$ is   an immersed surface in $\r^3_+$ and   $N_e$ is its unit normal  computed with the Euclidean metric, then $N=zN_e$ is a unit normal vector of $\Sigma$ in $\h^3$. Furthermore, if $\kappa_i$ are the Euclidean principal curvatures and $\overline{\kappa_i}$ the hyperbolic ones, then from \eqref{r} we deduce $\overline{\kappa_i}=z\kappa_i+(N_e)_3$, where $(\cdot)_3$ stands for the third coordinate. As a consequence, the Euclidean mean curvature $H_e$ and the hyperbolic one $H$ of $\Sigma$ are related by
$$H=zH_e+(N_e)_3=zH_e+\frac{N_3}{z}.$$

Next we show the umbilical surfaces of $\h^3$ since they will act as support surfaces of the capillary surfaces. The relation between the Euclidean and hyperbolic principal curvatures imply that  the umbilical surfaces of $\h^3$   are, as point-sets,  the intersection of the umbilical surfaces of Euclidean space $\r^3$ with $\r^3_{+}$. As a consequence, the umbilical surfaces of $\h^3$ are the following after an appropriate isometry of $\h^3$. 
\begin{enumerate}
\item Totally geodesic planes. They are vertical planes and hemispheres. For $\tau>0$, let $\P_{\tau}=\{(x,y,z)\in\r^3_{+}: y=\tau\}$ and $\S_\tau=\{(x,y,z)\in \r^3_{+}: x^2+y^2+z^2=\tau^2\}$.
\item Horospheres. They are horizontal planes.   For $\tau>0$, let  $\H_\tau=\{(x,y,z)\in\r^3_{+}: z=\tau\}$.  
\item Equidistant surfaces. They are slopped planes.  For  $\theta\in (-\pi/2,\pi/2)$, let $\E_\theta=\{(x,y,z)\in\r^3_{+}: z=y\tan\theta\}$.  
\item Geodesic spheres. They are spheres included in $\r^3_{+}$. For $0<\rho<c$, let $\S(c,\rho)=\{(x,y,z)\in\r^3_+:x^2+y^2+(z-c)^2=\rho^2\}$.
\end{enumerate}
Given two totally geodesic planes $\P_{\tau_1}$ and $\P_{\tau_2}$, or two horospheres $\H_{\tau_1}$ and $\H_{\tau_2}$,  we say that they are parallel and the domain between them is called a \emph{slab}. Similarly, the domain between the equidistant surfaces $\E_{\theta}$ and $\E_{-\theta}$ is called a \emph{wedge}.

We study the stability of pieces of horospheres as capillary surfaces supported on totally geodesic planes and equidistant surfaces. In the following result we show a particular case of Cor. \ref{c1} when $\Sigma$ is a horosphere, since $|A|^2=2$ and $\q=0$ along $\partial\Sigma$.  However, we give a direct proof  by   computing the eigenvalues of \eqref{eq1} and showing that all are positive. Not only for this reason. In the course of the proof of the following theorem, we will use a series of arguments to solve a differential equation by separation of variables. The same type of techniques will be used in the results of the rest of the article with some modifications in the equation  and in the boundary conditions. Therefore, the proof of the following theorem illustrates how to proceed in the successive results.

\begin{theorem}\label{t2}
A horosphere $\H_\tau$ is strongly stable, viewed as a capillary surface in a slab between two   totally geodesic planes, or in a wedge of two   equidistant surfaces.
\end{theorem}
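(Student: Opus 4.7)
The plan is to set up and explicitly solve the eigenvalue problem \eqref{eq1} by separation of variables, as the authors foreshadow. Since $\H_\tau$ is totally umbilical in $\h^3$ with both hyperbolic principal curvatures equal to $1$, one has $|A|^2=2$ and the Jacobi operator reduces to $J=\Delta$, the Laplace--Beltrami operator of the flat induced metric $(dx^2+dy^2)/\tau^2$; explicitly, $\Delta=\tau^2(\partial_{xx}+\partial_{yy})$.

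Next I verify that the contact angle is constant and that the boundary coefficient $\q$ in \eqref{qq} vanishes in both configurations. In the slab case between vertical totally geodesic planes $\P_{\tau_1},\P_{\tau_2}$, a direct hyperbolic inner product of the unit normals gives $\gamma=\pi/2$, so $\cot\gamma=0$; since $\overline{A}\equiv 0$ on a totally geodesic support, \eqref{qq} yields $\q=0$. In the wedge case between $\E_\theta$ and $\E_{-\theta}$, the horosphere intersects each support along a horizontal line $y=\pm\tau\cot\theta$, and computing normals there gives $\cos\gamma=-\cos\theta$ (choosing $N$ into the upper component, so $\gamma=\pi-\theta$). Since $\E_{\pm\theta}$ is a Euclidean plane, the relation $\overline{\kappa_i}=z\kappa_i+(N_e)_3$ shows that both hyperbolic principal curvatures with respect to $-\overline{N}$ equal $\cos\theta$, whence $\overline{A}(\bar\nu,\bar\nu)=\cos\theta$; combined with $A(\nu,\nu)=1$ from umbilicity,
\[
\q=\frac{\cos\theta}{\sin(\pi-\theta)}+\cot(\pi-\theta)\cdot 1=\cot\theta-\cot\theta=0.
\]

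With $J=\Delta$ and $\q=0$, the problem \eqref{eq1} becomes the Neumann problem for the flat Laplacian on the infinite strip $\Sigma=\{(x,y,\tau):y_1\le y\le y_2\}$, where $y_1,y_2$ are the appropriate endpoints in each case. Since $\Sigma$ is non-compact, I take the natural exhaustion $\Sigma_n=\Sigma\cap\{|x|\le n\}$ (imposing Dirichlet on the cut-off edges $x=\pm n$, consistent with $u\in C_0^\infty(\Sigma)$) and separate variables $u(x,y)=f(x)g(y)$. This decouples the problem into $f''+\alpha f=0$ with $f(\pm n)=0$ and $g''+\beta g=0$ with $g'(y_i)=0$, where $\lambda=\tau^2(\alpha+\beta)$. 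All eigenvalues $\alpha_k=(k\pi/(2n))^2$ for $k\ge 1$ and $\beta_m=(m\pi/(y_2-y_1))^2$ for $m\ge 0$ are non-negative, with the smallest combination $\lambda_1=\tau^2(\pi/(2n))^2>0$. Hence $\mathrm{index}(\Sigma_n)=0$ for each $n$, and \eqref{stable-infinito} gives $\mathrm{index}(\Sigma)=0$, i.e., $\H_\tau$ is strongly stable.

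The main technical point is the sign-careful computation of $\q$ in the wedge case, where the perfect cancellation rests on correctly identifying the supplementary angle $\gamma=\pi-\theta$ and orienting $\overline{N}$ outward. The separation-of-variables step is elementary here but serves as a template for the more elaborate computations (non-trivial $|A|^2-2$, non-zero $\q$, genuine Robin boundary conditions) that will appear in the subsequent theorems for Killing cylinders.
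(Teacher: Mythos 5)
Your proposal is correct and follows essentially the same route as the paper: verify $|A|^2=2$ so that $J=\Delta$, check $\q=0$ in both configurations (including the same sign-cancellation $\cot\theta-\cot\theta=0$ in the wedge, with $\gamma=\pi-\theta$ and $\overline{A}(\bar\nu,\bar\nu)=\cos\theta$), then solve the mixed Dirichlet/Neumann eigenvalue problem on an exhaustion by separation of variables and pass to the limit via \eqref{stable-infinito}. The only differences are cosmetic (symmetric truncation $|x|\le n$ instead of $0<x<T$).
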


\begin{proof} We denote by $W$ be the slab or    the wedge and by $\Sigma$   the portion of $\H_\tau$ in $W$. See Fig. \ref{fig2}.  After an isometry of $\h^3$ we suppose that the slab is determined by $\P_{-1}\cup\P_1$ and the wedge by  $\E_{-\theta}\cup\E_\theta$. So, the slab and the wedge are   $W=\{(x,y,z)\in\r^3_+:|y|<1\}$ and $W=\{(x,y,z)\in\r^3_+:|y|<z/\tan\theta\}$, respectively. In both cases, we will choose as the domain $D$ the part of $W$ above $\Sigma$, $D=W\cap \{(x,y,z)\in\r^3_+:z>\tau\}$. In particular, $N=\tau\e_3$ and $A=\mbox{id}$ on $\Sigma$.

\begin{figure}[h]
\begin{center}
 \includegraphics[width=.4\textwidth]{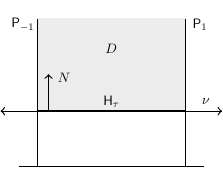}\quad \includegraphics[width=.4\textwidth]{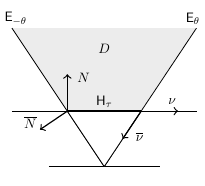}
\end{center}
\caption{Left: a horosphere supported in two vertical planes. Right: a horosphere supported in two equidistant surfaces.}\label{fig2}
\end{figure}

First, assume that $W$ is a slab. Then $\gamma=\pi/2$, so $\q=0$ because $\overline{A}=0$. For the computation of the eigenvalues of \eqref{eq1}, we parametrize $\Sigma$ by  $\Psi(t,s)=(t,s,\tau)$, $(t,s)\in \r\times (-1,1)$. Since $\Sigma$ is not compact,  and such as we exhibited in the last paragraph of Sect. \ref{s2}, it is enough to consider an exhaustion by compact subdomains of $\Sigma$. The compact subdomains are  $\Sigma_T=\Psi((0,T)\times (-1,1))=\Sigma\cap\{(x,y,z)\in\r^3_{+}: 0<x<T\}$, for $T>0$. The eigenvalue problem \eqref{eq1} must be now modified by 
 \begin{equation}\label{eqd0}
\left\lbrace\begin{array}{ll}
Ju+\lambda u=0 & \mathrm{in }\ \Sigma_T,\\
\frac{\partial u}{\partial \nu} =0 & \mathrm{on }\ \partial \Sigma_T\cap \P_\epsilon,\\
u=0& \mathrm{at }\ t=0,T.
\end{array}\right.
\end{equation}
We use the method of separation of variables by taking $u(t,s)=f(t)g(s)$. The coefficients of the first fundamental form with respect to $\Psi$ are $g_{11}=g_{22}=1/\tau^2$ and $g_{12}=0$. Then $\Delta=\tau^2(\partial_t^2+\partial_s^2)$. Since $\nu_\epsilon=\epsilon\tau\Psi_s$, we deduce 
$$\langle\nabla u,\nu_\epsilon\rangle=\tau^2\langle f'g\Psi_t+fg'\Psi_s,\nu_\epsilon\rangle=\tau^2fg'\langle\Psi_s,\nu_\epsilon\rangle=\epsilon\tau fg'.$$
Therefore, \eqref{eqd0} is equivalent to
 \begin{equation}\label{eqd00}
\left\lbrace\begin{array}{ll}
 f''g+fg''  + \frac{1}{\tau^2}\lambda fg=0 & \mathrm{in}\ (0,T)\times (-1,1),\\
g'(\pm 1)=0, &  \\
  f(0)= f(T)=0.&
\end{array}\right.
\end{equation} 
From the first equation, we get 
$$\frac{f''}{f}=\mu=-\frac{g''}{g}-  \frac{1}{\tau^2}\lambda,$$
for a certain constant $\mu$. The last boundary condition in \eqref{eqd00} implies $\mu<0$, and the solution for $f$ is
$$f(t)=c_1 \sin\left(\frac{m\pi}{T}t\right), \quad \mu=-\left(\frac{m\pi}{T}\right)^2,\  m\in\n.$$
Hence $g$ satisfies
$$g''+\left(\frac{\lambda}{\tau^2}-\left(\frac{m\pi}{T}\right)^2\right)g=0$$
with boundary conditions $g'(\pm 1)=0$. Let us distinguish cases.
\begin{enumerate}
\item Case $\frac{\lambda}{\tau^2}-(\frac{m\pi}{T})^2=0$. Then $g(s)=c_2$ is a constant function and consequently, 
$$u_m(t,s)=c_3 \sin\left(\frac{m\pi}{T}t\right),\quad  \lambda_m=\left(\frac{m\pi\tau}{T}\right)^2,\ m\in\n.$$
\item Case $\frac{\lambda}{\tau^2}-(\frac{m\pi}{T})^2=-\delta^2<0$. Now 
$$g(s)=c_2 \sinh(\delta s)+c_3  \cosh(\delta s).$$
The boundary conditions imply $c_3=0$ and $\delta\cosh\delta=0$. Since this equation has no positive solutions $\delta$, this case is not possible.
\item Case $\frac{\lambda}{\tau^2}-(\frac{m\pi}{T})^2=\delta^2>0$. The solution is 
$$g(s)=c_2\sin(\delta s)+c_3\cos(\delta s).$$
The boundary conditions $g'(\pm 1)=0$ yield $c_2=0$ and $\sin\delta=0$. Thus $\delta=n\pi$, $n\in\n$. Then the function $g$ is $g_n(s)=c_3\cos(n\pi s)$. Consequently, the eigenfunctions and eigenvalues are
$$u_{m,n}(t,s)= \sin\left(\frac{m\pi}{T}t\right) \cos(n\pi s),$$
$$ \lambda_{m,n}=\tau^2\left(n^2\pi^2 +\left(\frac{m\pi}{T}\right)^2\right),\quad n,m\in\n.$$
\end{enumerate}
This shows the positiveness of the eigenvalues of $\Sigma_T$, hence each $\Sigma_T$ is strongly stable. Taking limits as $T\rightarrow\infty$ we also conclude that $\Sigma$ is strongly stable thanks to \eqref{stable-infinito}.

Assume now that $W$ is a wedge.    Then we consider again the subdomains $\Sigma_T$, $T>0$, of $\Sigma$. We  have $\nu_\epsilon= \epsilon\tau \e_2$, 
 $$\overline{N}_1=z(\sin\theta \e_2-\cos\theta \e_3),\quad \overline{N}_{-1}=-z(\sin\theta \e_2+\cos\theta \e_3),$$
and  $A(\nu_\epsilon,\nu_\epsilon)=1$. On the other hand, 
 $$\bar{\nu}_1=-\tau(\cos\theta \e_2+\sin\theta \e_3),\quad \overline{\nu}_{-1}=\tau(\cos\theta \e_2-\sin\theta \e_3)$$
 and  $\overline{A}(\overline{\nu}_\epsilon,\overline{\nu}_\epsilon)= \cos\theta$. Then 
\[ \q= \frac{1}{\sin\theta} \overline{A} (\overline\nu,\overline\nu)- \cot\theta A(\nu,\nu)=\frac{\cos\theta}{\sin\theta} - \cot\theta=0 . 
\]
The eigenvalue problem coincides with   \eqref{eqd00}, obtaining the same conclusion. 
\end{proof}

\section{The Plateau-Rayleigh instability criterion}\label{s4}

In this section we will extend to the hyperbolic setting the Plateau-Rayleigh instability criterion of cylinders in Euclidean space \cite{pl,ra}, which asserts that a circular cylinder of radius $r$ and length $T$ is unstable whenever $T>2\pi r$. For that matter, we investigate compact pieces of Killing cylinders in $\h^3$ bounded by two circles, and explicitly compute how the separation between these circles establishes the instability of the Killing cylinder. First, we consider the instability in the case that the boundary consists in two fixed circles and next, when the boundary is contained in a pair of totally geodesic planes or a pair of horospheres.
 
We introduce some notation that will be useful hereinafter. Let $\Gamma_R$ be a circle of radius $R>0$, which   can be assumed to be parametrized by $\theta\mapsto (r\cos \theta, r\sin \theta,1)$, where $r=\sinh R$. If $L$ is the $z$-axis, then $\Gamma_R$ is invariant by the rotations of $\h^3$ about $L$.  Let $O=(0,0,0)$ be the intersection point of $L$ with $\{z=0\}$. The image of $\Gamma_R$ under the hyperbolic translations from $O$ defines the  Killing cylinder  $C_R$ of radius $R$. These hyperbolic translations are      Euclidean homotheties from $O$.  Thus a parametrization   of $C_R$ is 
\begin{equation}\label{p}
\Psi(t,\theta)=e^t\left(r\cos {\theta} ,r\sin {\theta} ,1\right),\quad t\in\r,\ \theta\in[0,2\pi].
\end{equation}
The unit normal of $C_R$ is
\begin{equation}\label{n}
N=\frac{e^{t}}{\sqrt{1+r^2}}\left(-\cos\theta,-\sin\theta,r\right),
\end{equation}
and the principal curvatures are  
$$\kappa_1= \frac{\sqrt{1+r^2}}{r},\quad \kappa_2=  \frac{r}{\sqrt{1+r^2}}.$$
Therefore
\begin{equation*}
\begin{split}
H&=\frac{1+2r^2}{2r\sqrt{1+r^2}}=\frac{\tanh R+\coth R}{2},\\
|A|^2&=\frac{(1+r^2)^2+r^4}{r^2(1+r^2)}=1+\tanh ^2R+\text{csch}^2R.
\end{split}
\end{equation*}
On the other hand,   the coefficients of the first fundamental form are $g_{11}=1+r^2$, $g_{12}=0$ and $g_{22}=r^2$. Then the Laplace-Beltrami operator is 
$$\Delta=\frac{1}{1+r^2}\partial_t^2 +\frac{1}{r^2}\partial_{\theta}^2,$$
and the Jacobi operator has the expression  
$$J=\Delta+|A|^2-2=  \frac{1}{1+r^2}\partial_t^2 +\frac{1}{r^2}\partial_{\theta}^2+\frac{1}{r^2(1+r^2)}.$$
Let 
$$\varpi=\frac{1}{r^2(1+r^2)}=\frac{1}{\sinh^2R\cosh^2R}.$$
Finally, we define $C_R^T$ as a piece of a Killing cylinder of radius $R>0$ and length $T$ parametrized as in \eqref{p} but now the variable $t$ varies in the interval $[0,T]$.

The first result concerns the stability of pieces of Killing cylinders with fixed boundary in the sense of  Rem. \ref{res}.

\begin{theorem}\label{t3} Let   $R,T>0$ and define
\begin{equation}\label{eqindex}
\eta(T)=\max\{m\in\n\colon m<\frac{T}{\pi\sinh R}\}.
\end{equation}
Consider the eigenvalue problem \eqref{eq1} of $C_R^T$ with Dirichlet boundary condition. Then, $\mathrm{index}(C_{R}^T)=\eta(T)$. In consequence,
\begin{enumerate}
\item if $T>\pi\sinh R$, then $C_R^T$ is not strongly stable;
\item if $T>2\pi\sinh R$, then $C_R^T$ is not stable.
\end{enumerate}
As a consequence the Killing cylinder $C_R$ is not stable.
\end{theorem}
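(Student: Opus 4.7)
The plan is to exploit the explicit form of the Jacobi operator $J=\frac{1}{1+r^2}\partial_t^2+\frac{1}{r^2}\partial_\theta^2+\varpi$ on $C_R^T$ and solve the Dirichlet eigenvalue problem
\[
Ju+\lambda u=0\text{ in }C_R^T,\qquad u=0\text{ on }\partial C_R^T,
\]
by separation of variables, precisely as the prototype computation in Thm.~\ref{t2}. Writing $u(t,\theta)=f(t)g(\theta)$ with $f(0)=f(T)=0$ and $g$ being $2\pi$-periodic, the equation splits into
\[
\frac{f''}{(1+r^2)f}=\tilde\mu,\qquad \frac{g''}{r^2 g}=-\tilde\mu-\varpi-\lambda,
\]
for a separation constant $\tilde\mu$. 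The Dirichlet conditions force $\tilde\mu=-\frac{1}{1+r^2}(m\pi/T)^2$ with $f_m(t)=\sin(m\pi t/T)$, $m\in\n$, while periodicity of $g$ yields $g_n(\theta)\in\{\cos(n\theta),\sin(n\theta)\}$ with $n\in\n\cup\{0\}$. A direct substitution then gives the full spectrum
\[
\lambda_{m,n}=\frac{1}{1+r^2}\left(\frac{m\pi}{T}\right)^2+\frac{n^2}{r^2}-\varpi,\qquad m\ge 1,\ n\ge 0.
\]

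Next I would count negative eigenvalues. Multiplying by $r^2(1+r^2)=\sinh^2R\cosh^2R$, the condition $\lambda_{m,n}<0$ becomes
\[
r^2\left(\frac{m\pi}{T}\right)^2+n^2(1+r^2)<1.
\]
For $n\ge 1$ the left-hand side already exceeds $1+r^2>1$, so only the ``axisymmetric'' modes $n=0$ can contribute a negative eigenvalue; these contribute exactly one dimension each (since $g_0$ is constant). For $n=0$ the condition reduces to $m\pi\sinh R<T$, i.e.\ $m<T/(\pi\sinh R)$, whose number of positive integer solutions is precisely $\eta(T)$. Hence $\mathrm{index}(C_R^T)=\eta(T)$.

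From here the two consequences are immediate. If $T>\pi\sinh R$ then $\eta(T)\ge 1$, so $\lambda_1<0$ and the surface is not strongly stable. If $T>2\pi\sinh R$ then $\eta(T)\ge 2$, hence $\lambda_2<0$; by the inequality $\mathrm{index}_w(\Sigma)\ge \mathrm{index}(\Sigma)-1$ from \eqref{ii} one has $\mathrm{index}_w(C_R^T)\ge 1$, and since the two eigenfunctions $u_{1,0},u_{2,0}$ span a $2$-dimensional subspace on which $Q$ is negative definite, a nontrivial linear combination $u$ with $\int_\Sigma u=0$ and $Q[u]<0$ exists, so $C_R^T$ is not stable. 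Finally, the non-stability of the entire Killing cylinder $C_R$ follows from the exhaustion definition \eqref{stable-infinito}: since $\eta(T)\to\infty$ as $T\to\infty$, both $\mathrm{index}_w(C_R)$ and $\mathrm{index}(C_R)$ are infinite.

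The only slightly delicate point I anticipate is the interplay between Dirichlet boundary data and the zero-mean constraint $\int_\Sigma u=0$ when moving from $\mathrm{index}$ to $\mathrm{index}_w$; this is handled cleanly by \eqref{ii} applied to the compact piece, so no genuine obstacle remains beyond a careful bookkeeping of the spectrum and of which modes are $2\pi$-periodic in $\theta$.
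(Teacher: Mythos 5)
Your proposal is correct and follows essentially the same route as the paper: separation of variables for the Dirichlet problem, the explicit spectrum $\lambda_{m,n}$, the observation that only the $n=0$ modes can be negative, and the count $m<T/(\pi\sinh R)$ giving $\eta(T)$, with the consequences drawn exactly as in the paper via \eqref{ii} and the exhaustion \eqref{stable-infinito}. No gaps to report.
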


\begin{proof}      We know that  $C_R^T=\Psi([0,T]\times[0,2\pi])$. Consider   the eigenvalue problem with Dirichlet boundary conditions
\begin{equation}\label{eqd}
\left\lbrace\begin{array}{ll}
Ju+\lambda u=0 & \mathrm{in}\ C_R^T,\\
u=0 & \mathrm{on}\ \partial C_R^T.
\end{array}\right.
\end{equation}
We use the method of separation of variables following similar steps as in Thm. \ref{t2}. Let   $u(t,\theta)=f(t)g(\theta)$, $t\in [0,T]$ and $\theta\in [0,2\pi ]$. Then the differential equation of \eqref{eqd} is 
\[\frac{1}{1+r^2}f''g+\frac{1}{r^2}fg''+(\varpi+\lambda)fg=0,\]
or equivalently,
$$
 \frac{f''}{(1+r^2)f}+\frac{g''}{r^2g}+\varpi+\lambda=0.
$$
Thus there is a constant $\mu\in\r$ such that
$$\frac{g''}{r^2 g}=\mu=-\frac{f''}{(1+r^2)f}-(\varpi+\lambda).$$
In particular, $g$ is an eigenfunction of the Laplacian on $\s^1$. This implies $\mu=-n^2/r^2$, $n\in\n$, and
\begin{equation}\label{gn}
g_n(\theta)=c_1 \cos( n\theta )+c_2 \sin( n\theta ),\quad c_1,c_2\in\r.
\end{equation} For the function $f$, we have 
\begin{equation}\label{fn}
f''+(1+r^2)(\varpi-\frac{n^2}{r^2}+\lambda)f=0.
\end{equation}
By the boundary conditions $f(0)=f(T)=0$, it is necessary that $(1+r^2)(\varpi-\frac{n^2}{r^2}+\lambda)=\delta^2>0$. Thus the solution of \eqref{fn} is 
$$f_m(t)=c_3 \sin\left(\frac{m\pi}{T}t\right),\quad m\in\n,$$
where $\delta=\frac{m\pi}{T}$. Notice that $m\not=0$. Definitively, the eigenfunctions are $u_{m,n}(s,t)=f_m(t)g_n(\theta)$ and the eigenvalues are
\begin{equation}\label{ekc}
 \lambda_{m,n}=\frac{m^2\pi^2}{(1+r^2)T^2}+\frac{n^2-1+n^2r^2}{r^2(1+r^2)}.
\end{equation}
Moreover we have $\lambda_{m,n}>\lambda_{m,0}$ and $\lambda_{m,n}>0$ if $n\geq 1$, and the sequence $\{\lambda_{m,0}\}_{m>0}$ is strictly increasing in $m$. Therefore, in order to compute the index of $C_{R}^T$ we only consider the eigenvalues $\lambda_{m,0}$. Furthermore, the index of $C_{R}^T$ is $m$ whenever $\lambda_{m,0}<0$ and $\lambda_{m+1,0}\geq0$. For that matter, by defining $\eta(T)$ as in \eqref{eqindex} we conclude $\mathrm{index}(C_{R}^T)=\eta(T)$. 

Finally,  if $T>\pi\sinh R$, then $\mathrm{index}(C_{R}^T)=1$ and  $C_{R}^T$ is not strongly stable. Similarly, if $T>2\pi\sinh R$ then $C_{R}^T$ is not stable because   $\mathrm{index}(C_{R}^T)=2$. 

The last statement is consequence of the above two items and the definition of the index of an unbounded surface given in \eqref{stable-infinito}. For this, we consider an exhaustion of $C_R$ given by the subdomains $C_R^T$ with $T\to\infty$.
\end{proof}

The next objective is to establish an analogy to the Plateau-Rayleigh result for Killing cylinders in the partitioning problem when the support surface is a pair of totally geodesic planes or a pair of horospheres. Consider again   $C_R^T$. The boundary of $C_R^T$ is formed by the  circles   $\partial C_R^T=\Gamma_R\cup\Gamma_R'$, where $\Gamma_R'=C_R^T\cap \{(x,y,z)\in\r^3_{+}:x^2+y^2=r^2\tau^2\}$. Notice that $\partial C_R^T$ is included in two   totally geodesic planes and also in two   horospheres, namely, 
$$\Gamma_R\subset\H_1\cap\S_{\tau_0},\quad \tau_0=\sqrt{1+r^2},$$
$$\Gamma_R'\subset\H_{\tau_1}\cap\S_{\tau_2},\quad \tau_1=e^T, \tau_2=\tau_0 e^T.$$
We see   $C_R^T$ as a capillary surface supported on either $\mathsf{S}_{\tau_0}\cup\mathsf{S}_{\tau_2}$ or $\mathsf{H}_1\cup\mathsf{H}_{\tau_1}$. However, we notice that if $T$ is sufficiently big, then $C_R^T$ is also a capillary surface supported on  $\mathsf{S}_{\tau_0}\cup \mathsf{H}_{\tau_1}$ or $\mathsf{H}_1\cup \mathsf{S}_{\tau_2}$.

The domain $W$ is bounded by   $\S_{\tau_0}$ and $\S_{\tau_2}$ in the first case, and $W$ is the slab between $\H_1$ and $\H_{\tau_1}$ in the second one. In both situations, the domain $D$ is the inner region in $W$ bounded by $C_R^T$. See Fig. \ref{fig3}.

\begin{figure}[h]
\begin{center}
\includegraphics[width=.4\textwidth]{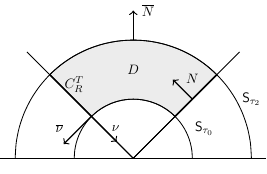}\quad \includegraphics[width=.4\textwidth]{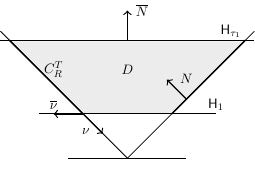}
\end{center}
\caption{Left: a Killing cylinder supported in two totally geodesic spheres. Right: a Killing cylinder supported in two horospheres.}\label{fig3}
\end{figure}

\begin{theorem}\label{t4} Let   $R,T>0$ and $\eta(T)$ as in \eqref{eqindex}. Consider $C_R^T$ as a capillary surface supported in either $\mathsf{S}_{\tau_0}\cup\mathsf{S}_{\tau_2}$ or $\mathsf{H}_1\cup\mathsf{H}_{\tau_1}$. Then, 
$$
\mathrm{index}(C_R^T)=\left\lbrace
\begin{array}{lll}
\eta(T)  & \mathrm{in} & \mathsf{S}_{\tau_0}\cup\mathsf{S}_{\tau_2},\\
\eta(T)+1 & \mathrm{in} & \mathsf{H}_1\cup\mathsf{H}_{\tau_1}.
\end{array}
\right.
$$
In consequence, if $C_R^T$ is supported between geodesic spheres, then the following holds:
\begin{enumerate}
\item If $T>\pi\sinh R$, then $C_R^T$ is not strongly stable.
\item If $T>2\pi\sinh R$, then $C_R^T$ is not stable.
\end{enumerate}
If $C_R^T$ is supported between horospheres, then $C_R^T$ is not strongly stable regardless of $T$, and if $T>\pi\sinh R$ then is not stable. 
 In particular, in both cases, $C_R$ is not stable. 
\end{theorem}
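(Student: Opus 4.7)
The plan is to imitate the separation-of-variables method used in the proof of Theorem \ref{t3}, now with the Robin boundary condition of \eqref{eq1} determined by the coefficient $\q$ in \eqref{qq} at each boundary circle of $C_R^T$. The two choices of supports produce different boundary conditions, and so different eigenvalue counts.

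First I would compute $\q$ on each boundary component. On $\S_{\tau_0}$ and $\S_{\tau_2}$: the vector $\Psi_t$ at $t=0$ is Euclidean-parallel to the outward Euclidean normal of $\S_{\tau_0}$, so the Killing cylinder meets each sphere orthogonally (the same being true at $t=T$ on $\S_{\tau_2}$). Thus $\gamma=\pi/2$, and since totally geodesic surfaces satisfy $\overline{A}=0$, formula \eqref{qq} yields $\q=0$. After separating variables, this reduces to the Neumann condition $f'(0)=f'(T)=0$. On $\H_1$ and $\H_{\tau_1}$: a direct computation of $\langle N,\overline{N}\rangle$ gives $\cos\gamma=\mp\tanh R$ and $\sin\gamma=\operatorname{sech} R$. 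Combining the horosphere umbilicity $\overline{A}(\bar\nu,\bar\nu)=\pm 1$ (sign depending on the choice of $-\overline{N}$) with the value $A(\nu,\nu)=r/\sqrt{1+r^2}$ coming from the Killing cylinder, \eqref{qq} simplifies to $\q=\pm 1/\sqrt{1+r^2}$, and the Robin condition becomes $f'(0)+f(0)=0=f'(T)+f(T)$.

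Next I would apply the separation $u(t,\theta)=f(t)g(\theta)$. The angular equation is identical to that of Theorem \ref{t3}, producing $g_n(\theta)=c_1\cos n\theta+c_2\sin n\theta$ with $n\in\n\cup\{0\}$, while the radial equation is
\[ f''+(1+r^2)\Bigl(\varpi+\lambda-\tfrac{n^2}{r^2}\Bigr)f=0. \]
In the sphere (Neumann) case one finds the oscillatory spectrum $f_k(t)=\cos(k\pi t/T)$, $k\geq 0$, giving eigenvalues
\[ \lambda_{k,n}=\frac{k^2\pi^2}{(1+r^2)T^2}+\frac{n^2(1+r^2)-1}{r^2(1+r^2)}, \]
and a count of the negative ones (only $n=0$ contributes) yields the asserted index $\eta(T)$. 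In the horosphere (Robin) case the compatibility condition at $t=T$ factorizes, in the oscillatory regime, as $(\omega^2+1)\sin(\omega T)=0$, giving $\omega=k\pi/T$ for $k\geq 1$ and the same formula for $\lambda_{k,n}$; additionally, in the hyperbolic regime one finds the exceptional relation $(1-\mu^2)\sinh(\mu T)=0$, which forces $\mu=1$ and contributes one extra eigenvalue $\lambda=(n^2-1)/r^2$ for each $n$. The resulting negative eigenvalue $-1/r^2$ at $n=0$ is exactly the ``$+1$'' that raises the index to $\eta(T)+1$.

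The stability conclusions then follow mechanically from \eqref{ii}: index $\geq 1$ rules out strong stability, whereas index $\geq 2$ forces $\mathrm{index}_w\geq 1$, hence instability. The last assertion about the full Killing cylinder $C_R$ is obtained by taking $T\to\infty$ in the exhaustion \eqref{stable-infinito}. The main obstacle I anticipate is the careful bookkeeping in the horosphere case: pinning down the correct sign of $\q$ at each circle, and isolating the exceptional hyperbolic eigenvalue at $\mu=1$ in the Robin problem. This exceptional mode is precisely the geometric mechanism responsible for the ``$+1$'' difference in index between the two supports, and hence for the qualitatively different stability behavior when $C_R^T$ sits between horospheres rather than between totally geodesic planes.
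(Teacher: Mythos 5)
Your overall route --- computing $\q$ on each support from \eqref{qq}, separating variables, and sorting the radial ODE into linear, oscillatory and hyperbolic regimes --- is exactly the paper's. Your horosphere case is correct and complete: the boundary conditions $f'(0)+f(0)=0=f'(T)+f(T)$, the factorization $(\delta^2+1)\sin(\delta T)=0$ in the oscillatory regime, and the exceptional hyperbolic root $\delta=1$ with eigenvalue $(n^2-1)/r^2$ (hence the single extra negative eigenvalue $-1/r^2$ at $n=0$) all match the paper, as do the stability conclusions via \eqref{ii} and the exhaustion argument for $C_R$.

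The gap is in the sphere (Neumann) case. You list the radial modes as $f_k(t)=\cos(k\pi t/T)$ with $k\geq 0$ and then assert that counting the negative eigenvalues gives $\eta(T)$. But for $k=0$, $n=0$ your family contains the constant function $u\equiv 1$, which satisfies the Neumann condition and $Ju+\lambda u=0$ with $\lambda_{0,0}=-\varpi<0$; including it, your own spectrum produces $\eta(T)+1$ negative eigenvalues, not $\eta(T)$. So either you must justify discarding this mode (the paper isolates the degenerate case $f''=0$ and argues that $n=0$ is then inadmissible) or your count contradicts the statement you are proving. This is not cosmetic: the presence or absence of exactly one such non-oscillatory mode is the entire content of the $\eta(T)$ versus $\eta(T)+1$ dichotomy in the theorem, and in the horosphere case you do count a non-mean-zero eigenfunction (the mode $-e^{-t}$), so whatever admissibility rule you invoke has to be applied uniformly to both supports. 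As written, the sphere half of the index formula is not established until the $k=0$ mode is explicitly dealt with.
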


\begin{proof}

We begin by assuming that $C_R^T$ is  a capillary surface in $\S_{\tau_0}\cup \S_{\tau_2}$. It is clear that $\gamma=\pi/2$. Since $\overline{A}=0$, then $\q=0$. Let use   separation of variables, $u(t,\theta)=f(t)g(\theta)$. The conormal vector $\nu$ is $\pm\Psi_t/\sqrt{1+r^2}$. Thus $\frac{\partial u}{\partial\nu}=0$ is equivalent to $f'(0)=f'(T)=0$ and the eigenvalue problem is
 \begin{equation}\label{eqd3}
\left\lbrace\begin{array}{ll}
\frac{1}{1+r^2}f''g+\frac{1}{r^2}fg''+(\varpi+\lambda)fg=0 & \mathrm{in}\ [0,T]\times[0,2\pi],\\
f'(0)=f'(T)=0. &  
\end{array}\right.
\end{equation} 
The computations and the discussion of the cases are similar as in Thm. \ref{t3}.  We have $g_n$ as in \eqref{gn} and $f$ satisfies \eqref{fn}.
\begin{enumerate}
\item Case $ \varpi-\frac{n^2}{r^2}+\lambda =0$. Then  $f(t)=1$, the eigenfunctions are $u_{n}(t,\theta)=g_n(t)$ and the eigenvalues are 
$$\lambda_n=\frac{n^2}{r^2}-\frac{1}{r^2(1+r^2)}.$$
Note that the case $n=0$ would yield $u$ a constant function, which does not fulfill \eqref{eqd3}. Hence $n\geq1$ and all the eigenvalues are positive.
\item Case $(1+r^2)(\varpi-\frac{n^2}{r^2}+\lambda)=-\delta^2>0$. Then $f(t)=c_1\cosh(\delta t)+c_2\sinh(\delta t)$. The boundary condition $f'(0)=0$ yields $c_2=0$, and $f'(T)=0$ yields $\delta=0$, a contradiction. Hence this case is not possible.
 \item Case $(1+r^2)(\varpi-\frac{n^2}{r^2}+\lambda)=\delta^2>0$. Then  $f_m(t)= \cos(m\pi t/T)$, $m\in\n$, $\delta=m\pi/T$ and 
\[u_{m,n}(t,\theta)=\cos\left(\frac{ m \pi}{ T} t\right)(c_1 \cos( n\theta )+c_2 \sin( n\theta )).\]
The eigenvalues are given in \eqref{ekc}, hence the negative ones appear when $\lambda_{m,0}<0$. Thus the final argument carries over verbatim as the proof of Thm. \ref{t3}.
\end{enumerate} 

We now study $C_R^T$ as a capillary surface in $\H_1\cup\H_{\tau_1}$.  We have $N$ as in \eqref{n} and $\overline{N}_1=-\e_3$, $\overline{N}_{\tau_1}=\tau_1 \e_3$. The contact angle $\gamma_1\in (\pi/2,\pi)$ along $\Gamma_R$ is $\cos\gamma_1=-r/\sqrt{1+r^2}$, and along $\Gamma_R'$, the angle is $\gamma_2\in (0,\pi/2)$, with $\cos\gamma_2=r/\sqrt{1+r^2}$. The outward conormal vectors of $\Sigma$ along $\Gamma_R\cup\Gamma_R'$ are
$$\nu_1=-\frac{1}{\sqrt{1+r^2}}(r\cos\theta,r\sin\theta,1),\quad \nu_{\tau_1}=\frac{\tau_1}{\sqrt{1+r^2}}(r\cos\theta,r\sin\theta,1).$$
The outward conormal vectors of $\H_1$ and $\H_\tau$ along $\partial\Sigma$ are
$$\overline{\nu}_1=(\cos\theta,\sin\theta,0),\quad\overline{\nu}_{\tau_1}=\tau_1(\cos\theta,\sin\theta,0).$$
We compute $A(\nu,\nu)$. Using \eqref{r} and \eqref{n}, we get 
\begin{equation}\label{eqAcilindro}
A(\nu,\nu)=-\langle\nabla^e_\nu N,\nu\rangle+\frac{N_3}{\tau}=\frac{N_3}{\tau}=\frac{r}{\sqrt{1+r^2}}.
\end{equation}
On the other hand, bearing in mind that $\overline{A}$ is computed with respect to $-\overline{N}$ one has
$\overline{A}(\overline{\nu}_1,\overline{\nu}_1)=1=-\overline{A}(\overline{\nu}_{\tau_1},\overline{\nu}_{\tau_1})$. Thus
\begin{align*}
\q_1&=\frac{1}{\sin\gamma_1}+\cot\gamma_1 \frac{r}{\sqrt{1+r^2}}=\frac{1}{\sqrt{1+r^2}},\\
\q_{\tau_1}&=-\frac{1}{\sin\gamma_2}+\cot\gamma_2 \frac{r}{\sqrt{1+r^2}}=-\frac{1}{\sqrt{1+r^2}}.
\end{align*}
Letting $u(t,\theta)=f(t)g(\theta)$, we have 
$$\frac{\partial u}{\partial\nu_1}=  -\frac{f'g}{\sqrt{1+r^2}}=-\frac{\partial u}{\partial\nu_{\tau_1}}.$$
 The eigenvalue problem is as in \eqref{eqd3} but with boundary conditions
\begin{equation*}
\left\lbrace\begin{array}{ll}
f'(0)+f(0)=0,\\
f'(T)+f(T)=0. &
\end{array}\right.
\end{equation*}
Again, we have $g_n$ as in \eqref{gn} and $f$ satisfies \eqref{fn}.  
\begin{enumerate}
\item Case $ \varpi-\frac{n^2}{r^2}+\lambda =0$. Then $f(t)=a+bt$. The boundary conditions imply $f=0$. This case is not possible.
\item Case $(1+r^2)(\varpi-\frac{n^2}{r^2}+\lambda)=\delta^2>0$. Then $f(t)=c_3\cos(\delta t)+c_4\sin(\delta t)$. The first boundary condition implies $c_4\delta+c_3=0$. Thus $f(t)=-c_4\delta\cos(\delta t)+c_4\sin(\delta t)$. The second boundary condition gives $\sin(\delta T)=0$ and thus, 
$\delta T=m\pi$, $m\in\n$.   Then 
\[u_{m,n}(t,\theta)=\left(-\frac{m\pi}{T} \cos\left(\frac{m\pi}{T} t\right)+\sin\left(\frac{m\pi}{T} t\right)\right)g_n(\theta) \]
and the eigenvalues are as in \eqref{ekc}. In consequence,  the instability argument holds as in the case of  totally geodesic planes.  

\item   Case $(1+r^2)(\varpi-\frac{n^2}{r^2}+\lambda)=-\delta^2<0$. Then $f(t)=c_3\cosh(\delta t)+c_4\sinh(\delta t)$. The first boundary condition implies $c_4\delta+c_3=0$. Then    $f(t)=-c_4\delta\cosh(\delta t)+c_4\sinh(\delta t)$. The second boundary condition implies $\delta=1$. Thus the eigenfunctions are $u_{n}(t,\theta)=-e^{-t}g_n(\theta)$  and the eigenvalues are
$$\lambda_{n}=\frac{n^2-1}{r^2}.$$
Then $\lambda_n=\lambda_{0,n}$ and there is only one negative eigenvalue, namely $\lambda_0=-1/r^2$.

\end{enumerate}
 The last statement is consequence of \eqref{stable-infinito}.

\end{proof}

A slightly variation of Thm. \ref{t4} is the following. Consider the Killing cylinder $C_R$. The totally geodesic plane  $\P_0$ separates $C_R$ into two Killing half cylinders. Let $C_R^{+}=C_R\cap\{(x,y,z)\in\r^3_{+}:y>0\}$. The surface $C_R^{+}$ is a capillary surface on $\P_0$ with contact angle $\gamma=\pi/2$, hence its stability is studied by taking compact pieces of $C_R^{+}$. Indeed, for $T>0$, let $C_R^{+,T}=C_R^{+}\cap C_R^T$ and consider the eigenvalue problem \eqref{eq1} with the additional assumption that the test function $u$ vanishes along $\partial C_R^{+,T}\cap\partial C_R^T$. After a symmetry about $\P_0$, the Morse index agrees with the one of $C_R^T$ in Thm. \ref{t4}. This gives the same estimate $T>2\pi\sinh R$ for instability. 
 
In contrast, if we assume Dirichlet boundary conditions along $\partial C_R^{+,T}$, then $C_R^{+,T}$ is strongly stable. The proof of this property can be done in two ways. First, considering the corresponding eigenvalue problem. Following the notation used in this section,   we have   $C_R^{+,T}= \Psi([0,T]\times [0,\pi])$. By separation of variables $u(t,\theta)=f(t)g(\theta)$, the function $f$ satisfies \eqref{fn} with boundary conditions $  f(0)= f(T)=0$ and $ g(0)=g(\pi)=0$. Then the eigenfunctions are $u_{m,n}(t,\theta)=\sin(\frac{m\pi}{T} t) \sin(n\theta)$, $n,m\geq 1$ and the eigenvalues are $  \lambda_{m,n}$ as in \eqref{ekc}. Since  $m,n\geq 1$, we have that all eigenvalues are positive, proving the strongly stability of  $C_R^{+,T}$ for any $T>0$. A second argument is to observe that $C_R^+$ is a horocycle graph on a domain of $\P_0$. Thus the function $h=\langle N,\e_2\rangle$ is a function with constant sign on $C_R^+$. Since  $h$ is a Jacobi function,  $Jh=0$, 
then strong stability is immediate  \cite{fs1}.

 
\section{Index of a cylinder in a ball}\label{sb}

Wang and Chia proved that any stable   capillary  surface in a ball  of $\h^3$ is totally umbilical  \cite{wx}. Killing cylinders included in a ball are  capillary surfaces but not umbilical. Thus it is natural to ask about its index.    We point out that the computation of the index of particular capillary surfaces in a ball is not easy. For example,    it has been recently computed the index of the critical catenoid in a ball \cite{dev,sz,tr}. For a cylinder in a Euclidean ball, the study has been addressed in \cite{tz2}.

Suppose that $\Sigma$ is a piece of a Killing cylinder of radius $R$ included in a ball of radius $\rho>0$, $r=\sinh R$. Without loss of generality, we suppose that this ball is the interior domain of the sphere $\S(c,\rho)=\{(x,y,z)\in\r^3_+:x^2+y^2+(z-c)^2=\rho^2\}$, where $c>\rho$. For saving notation, we will simply denote the sphere by $\S$. The mean curvature of $\S$ is $H_0=c/\rho$. In order that $\Sigma$ intersects $\S$, the radius $r$ must satisfy
$$
r<\frac{\rho}{\sqrt{c^2-\rho^2}}=\frac{1}{\sqrt{H_0^2-1}}.
$$
When $r=\frac{1}{\sqrt{H_0^2-1}}$,   the cylinder is tangent to $\S$ and if $r>\frac{1}{\sqrt{H_0^2-1}}$ the cylinder does not intersect $\S$. Using the parametrization \eqref{p}, the Killing cylinder $\Sigma$  is   $C_r([t_{-},t_{+}]):=\Psi([t_{-},t_{+}]\times\r)$ (see Fig. \ref{figpartitioningball}),  where $t_{\pm}$ are determined by 
$$
e^{t_{\pm}}=\frac{1}{1+r^2}(c\pm\sqrt{\rho^2-r^2(c^2-\rho^2)})=\frac{\rho}{1+r^2}(H_0\pm\sqrt{1-r^2(H_0^2-1)}).
$$

\begin{figure}[h]
\begin{center}
\includegraphics[width=.55\textwidth]{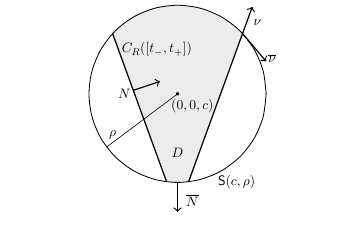}
\caption{The partitioning problem when the Killing cylinder is inside the ball determined by $\S(c,\rho)$.}
\label{figpartitioningball}
\end{center}
\end{figure}

\begin{theorem}\label{t-ib}
 Consider the Killing cylinder $C_r([t_{-},t_{+}])$ included in the ball determined by $\S$. Then, $\mathrm{index}(C_r([t_{-},t_{+}]))\geq 1$. Moreover,
\begin{enumerate}
\item  The index grows to $\infty$ as $r\to 0$.
\item There exists $r_0$ close to the value $\frac{1}{\sqrt{H_0^2-1}}$ such that $\mathrm{index}(C_r([t_{-},t_{+}]))= 1$  for all $r\in (r_0,\frac{1}{\sqrt{H_0^2-1}})$.
\item The function $r\mapsto \mathrm{index}(C_r([t_{-},t_{+}]))$ is decreasing in the interval $(0,\frac{1}{\sqrt{H_0^2-1}})$ varying from $\infty$ to $1$.
\end{enumerate}
\end{theorem}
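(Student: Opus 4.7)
The plan is to proceed in the same spirit as Thm.~\ref{t3} and Thm.~\ref{t4}: parametrize the truncated cylinder by \eqref{p} on $[t_-,t_+]\times[0,2\pi]$, set up the capillary eigenvalue problem \eqref{eq1}, and perform separation of variables $u(t,\theta)=f(t)g(\theta)$. The angular part again gives $g_n(\theta)=c_1\cos(n\theta)+c_2\sin(n\theta)$ with $n\in\n$, and the radial equation is \eqref{fn}. The new ingredient is the boundary condition on the two circles $\Gamma_\pm=C_r\cap\S$. First I would compute, using \eqref{r} and the umbilicity of $\S$ (principal curvatures equal to $H_0=c/\rho$ with respect to the inward normal), the quantities $\overline{A}(\overline\nu_\pm,\overline\nu_\pm)=\pm H_0$ together with the contact angles $\gamma_\pm$ and $A(\nu_\pm,\nu_\pm)$, which by \eqref{eqAcilindro} is constant in $\theta$. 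Substituting into \eqref{qq} yields explicit constants $\q_\pm$, and hence Robin conditions at $t=t_\pm$ for $f$ that are independent of $n$. This is the key reduction.

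Denote by $\{\mu_m\}_{m\in\n}$ the ordered spectrum of the one-dimensional Sturm--Liouville problem $-f''=\mu f$ on $[t_-,t_+]$ with these Robin conditions. As in the previous theorems, the eigenvalues of \eqref{eq1} decouple as
\[
\lambda_{m,n}(r)=-\frac{\mu_m}{1+r^2}+\frac{n^2}{r^2}-\varpi,\qquad m,n\in\n,
\]
so $\mathrm{index}(C_r([t_-,t_+]))=\#\{(m,n):\lambda_{m,n}(r)<0\}$ counted with the appropriate multiplicity in $\theta$ (double for $n\geq 1$). The core observation is that for $n\geq 1$ the factor $n^2/r^2-\varpi=(n^2(1+r^2)-1)/(r^2(1+r^2))>0$ is positive and increases with $n$, so negative eigenvalues can only occur when $\mu_m$ is sufficiently large. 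For $n=0$ the term $-\varpi$ is very negative and drives negative eigenvalues. From here the four statements follow:

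For (1), using the rotationally invariant test function corresponding to the lowest Dirichlet-like eigenfunction (or simply $u\equiv 1$ combined with the constancy of $\q_\pm$ and $|A|^2-2=\tanh^2R+\mathrm{csch}^2R-1$), I would show that $\lambda_{1,0}(r)<0$: indeed $\mu_1$ is real and $-\mu_1/(1+r^2)-\varpi<0$ after estimation using that $\q_\pm$ have definite signs controlled by $H_0$. For (2), as $r\to 0^+$ the interval length $t_+-t_-$ tends to $2\,\mathrm{arctanh}(\rho/c)$, hence stays bounded away from $0$ and $\infty$, so each $\mu_m$ stays bounded while $\varpi=\frac{1}{r^2(1+r^2)}\to\infty$. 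Thus $\lambda_{m,0}(r)\to -\infty$ for every fixed $m$, forcing infinitely many negative eigenvalues in the limit. For (3), as $r\to(H_0^2-1)^{-1/2}$ the length $t_+-t_-\to 0$; by standard Sturm--Liouville comparison on short intervals, $\mu_m\to +\infty$ for $m\geq 2$, while $\mu_1$ remains bounded (being the unique negative or smallest eigenvalue enforced by the Robin data), so only $\lambda_{1,0}$ stays negative. Hence the index becomes exactly $1$ for $r$ close to the tangency value.

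For (4), I would use the variational min--max characterization of $\lambda_{m,n}(r)$ combined with the continuous, monotone dependence of $\mu_m(r)$ on $r$. Since every $\lambda_{m,n}$ depends continuously on $r$ in $(0,(H_0^2-1)^{-1/2})$, it can only cross zero when some eigenvalue vanishes; I would argue via the explicit form that $\lambda_{m,n}(r)$ is strictly increasing in $r$ (the dominant negative contribution $-\varpi$ is strictly increasing in $r$, and the positive terms involving $n^2/r^2$ and the growth of $\mu_m$ as the interval shrinks further boost positivity). Consequently, the counting function $r\mapsto\mathrm{index}(C_r([t_-,t_+]))$ is nonincreasing, interpolating between the values $\infty$ (from (2)) and $1$ (from (3)), and the monotonicity combined with the fact that the index is integer-valued delivers the claim.

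The main obstacle will be the careful handling of the Sturm--Liouville problem for $f$ with $n$-independent Robin data, specifically: (i) verifying that the sign and size of the coefficients $\q_\pm$ (which involve $H_0$ and $r$ through the conformal factor) guarantee exactly one negative $\mu_m$ in the short-interval regime, and (ii) establishing the strict monotonicity of $\mu_m(r)$ in $r$. Both reduce to standard Sturm--Liouville comparison principles, but the dependence of both the endpoints $t_\pm(r)$ and the boundary constants $\q_\pm(r)$ on the parameter $r$ requires a simultaneous change of variables (for instance, rescaling $[t_-,t_+]$ to $[0,1]$) before applying the comparison theorems.
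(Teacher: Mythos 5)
Your overall strategy is the paper's: separate variables, reduce to a one--dimensional Robin problem for $f$ on $[t_-,t_+]$, and count which of its eigenvalues fall below the threshold coming from $-\varpi$. However, two points in your setup are off. Since both boundary circles lie on the \emph{same} umbilical sphere $\S$, one has $\overline{A}(\overline\nu,\overline\nu)=H_0$ and the same contact angle at both circles, so $\q=\frac{H_0}{\alpha\sqrt{1+r^2}}$ (with $\alpha=\sqrt{1-r^2(H_0^2-1)}$) is the \emph{same} constant at both ends, not $\pm H_0$; the asymmetry $f'(t_-)+\frac{H_0}{\alpha}f(t_-)=0$, $f'(t_+)-\frac{H_0}{\alpha}f(t_+)=0$ comes only from $\frac{\partial u}{\partial\nu}=\mp f'g/\sqrt{1+r^2}$ at the two ends. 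Also, with your convention $-f''=\mu f$ the decomposition reads $\lambda_{m,n}=+\frac{\mu_m}{1+r^2}+\frac{n^2}{r^2}-\varpi$; the minus sign you wrote would make every truncated cylinder have infinite index.

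The genuine gap is that the two hard claims are delegated to ``standard Sturm--Liouville comparison,'' which does not apply as stated because the interval length $T=t_+-t_-=\log\frac{H_0+\alpha}{H_0-\alpha}$ and the Robin coefficient $\sigma=H_0/\alpha$ degenerate \emph{simultaneously} ($T\to 0$ while $\sigma\to\infty$) as $r$ approaches the tangency value. What the paper actually proves is: (i) the Robin problem has exactly one negative eigenvalue, and it is $\mu_1=-1$ \emph{independently of $r$}, because $\delta=1$ is the unique positive root of $\bigl(\frac{H_0+\alpha}{H_0-\alpha}\bigr)^{\delta}\frac{H_0-\alpha\delta}{H_0+\alpha\delta}=1$; since $-\frac{1}{1+r^2}-\varpi+\frac{n^2}{r^2}=\frac{n^2-1}{r^2}$, this yields exactly one negative $\lambda$ (at $n=0$) and hence $\mathrm{index}\geq 1$; and (ii) the quantitative bound $\frac{\pi}{2T}<\sigma<\frac{\pi}{T}$, which shows the oscillatory roots of $\tan(\delta T)=\frac{2\sigma\delta}{\sigma^2-\delta^2}$ all exceed $\frac{\pi}{2T}$, so near tangency, where $\frac{1}{r}<\frac{\pi}{2T}$, none of them satisfies the negativity criterion $\delta_m<\frac{1}{r}$ and the index is exactly $1$. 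Without establishing (i) and the control of the product $T\sigma$ in (ii), your arguments for ``$\mu_1$ remains bounded'' and for item (2) of the theorem are not justified; your limiting argument for item (1) (as $r\to 0$, $T$ stays bounded while $\varpi\to\infty$) is the only part that goes through essentially as written.
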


\begin{proof}
In order to simplify the notation, let  
$$\alpha=\sqrt{1-r^2(H_0^2-1)},\quad \beta=\frac{\rho}{1+r^2}.$$
In particular, 
$$
e^{t_{\pm}}=\beta(H_0\pm\alpha).
$$
The unit normals $N_e,\overline{N_e}$, of $C_R$ and $\mathsf{S}$ respectively, are given by
$$
N_e=\frac{1}{\sqrt{1+r^2}}(-\cos\theta,-\sin\theta,r),\quad \overline{N_e}(x,y,z)=\frac{1}{\rho}(x,y,z-c).
$$
The contact angle $\gamma$ between $C_r([t_{-},t_{+}])$ and $\S$ is determined by 
$$
\cos\gamma=-\frac{rH_0}{\sqrt{1+r^2}},\quad\sin\gamma=\frac{\sqrt{\rho^2-r^2(c^2-\rho^2)}}{\rho\sqrt{1+r^2}}=\frac{\alpha}{\sqrt{1+r^2}}.
$$
Note that $\overline{A}(\overline{\nu},\overline{\nu})=H_0$, since $\mathsf{S}$ is totally umbilical, while it was exhibited in \eqref{eqAcilindro} that $A(\nu,\nu)=\frac{r}{\sqrt{1+r^2}}$. Then,
$$\q=\frac{H_0\sqrt{1+r^2}}{\alpha}-\frac{H_0r^2}{\alpha\sqrt{1+r^2}}=\frac{H_0}{\alpha\sqrt{1+r^2}}.$$
 Let us take $u(t,\theta)=f(t)g(\theta)$. Recall that $\frac{\partial u}{\partial\nu}(t_-)=-f'g/\sqrt{1+r^2}$ and  $\frac{\partial u}{\partial\nu}(t_+)=f'g/\sqrt{1+r^2}$.
 Then the Robin conditions are equivalent to 
\begin{align*}
f'(t_-)+\displaystyle{\frac{H_0}{\alpha}}f(t_-)=0,\\
f'(t_+)-\displaystyle{\frac{H_0}{\alpha}}f(t_+)=0.
\end{align*}
Consequently, the eigenvalue problem is
$$
\left\lbrace
\begin{array}{ll}
\vspace{.15cm}\frac{1}{1+r^2}f''g+\frac{1}{r^2}fg''+(\varpi+\lambda)fg=0 & \mathrm{in}\ [t_-,t_+]\times[0,2\pi],\\
\vspace{.15cm}f'(t_-)+\frac{H_0}{\alpha}f(t_-)=0,\\
f'(t_+)-\frac{H_0}{\alpha}f(t_+)=0.
\end{array}\right.
$$
Arguing as in the proof of Thm. \ref{t3} we have $g_n$ as in \eqref{gn} and $f$ satisfies \eqref{fn}.   We discuss cases.
\begin{enumerate}
\item Case $\varpi-\frac{n^2}{r^2}+\lambda=0$. Then, $f(t)=at+b$. The above boundary conditions yield
\begin{equation*}
a+\frac{H_0}{\alpha}(at_-+b)=0,\quad a-\frac{H_0}{\alpha}(at_++b)=0.
\end{equation*}
If there are non-trivial solutions  $a$ and $b$, then
$$2+\frac{H_0}{\alpha}(t_{-}-t_{+})=0.$$
This is equivalent to 
\begin{equation}\label{aa}
0=2+\frac{H_0}{\alpha}\log\frac{H_0-\alpha}{H_0+\alpha}.
\end{equation}
Let $\sigma=H_0/\alpha$. Since $\alpha$ varies from $0$ to $1$, then $\sigma$ varies from $H_0$ to $\infty$. 
The function 
$$\sigma\mapsto 2+\sigma\log\frac{\sigma-1}{\sigma+1}$$
is strictly increasing on $\sigma$ going to $0$ as $\sigma\to\infty$.  This proves that this case is not possible.
\item Case $(1+r^2)(\varpi-\frac{n^2}{r^2}+\lambda)=-\delta^2$ with $\delta>0$. Then, $f(t)=ae^{\delta t}+be^{-\delta t}$. 
The existence of non-trivial constants $a,b$ fulfilling the boundary conditions is equivalent to
$$
\left(\frac{H_0+\alpha}{H_0-\alpha}\right)^\delta\left(\frac{H_0-\alpha\delta}{H_0+\alpha\delta}\right)=1.
$$
Let $h(\delta)$ be  the left-hand side of the previous equation. We are looking for   a positive solution $\delta$ of the equation $h(\delta)=1$. The function $h$ satisfies $h(0)=1$, then $h$ is increasing for $\delta>0$, achieving a global maximum, and then decreases towards $-\infty$ as  $\delta\rightarrow\infty$. Consequently $h(\delta)=1$ has a unique solution. Since $h(1)=1$, then  $\delta_o=1$ is the solution. The eigenvalues are
$$
\lambda_n=-\frac{\delta_o^2}{1+r^2}+\frac{n^2(1+r^2)-1}{r^2(1+r^2)}=\frac{n^2-1}{r^2}.$$
Hence $n=0$ gives the only negative eigenvalue, contributing in $1$ to the index of $\Sigma$. This proves the first statement of the theorem.
\item Case $(1+r^2)(\varpi-\frac{n^2}{r^2}+\lambda)=\delta^2$, $\delta>0$. Then, $f(t)=a\cos(\delta t)+b\sin(\delta t)$.  Arguing as in the previous case, there exist non-trivial values $a,b$ fulfilling the Robin boundary condition if and only if 
$$
2H_0\alpha\delta\cos((t_{+}-t_{-})\delta)-(H_0^2-\alpha^2\delta^2)\sin ((t_{+}-t_{-})\delta)=0.
$$

If  $T=t_{+}-t_{-}=\log\frac{H_0+\alpha}{H_0-\alpha}$, then the above equation can be expressed as

$$
\tan (\delta T)=\frac{2H_0\alpha\delta}{H_0^2-\alpha^2\delta^2} =\frac{2\sigma\delta}{\sigma^2-\delta^2},\quad\sigma=\frac{H_0}{\alpha}. 
$$
Consider the functions $p(\delta)=\tan(\delta T)$ and $q(\delta)=2\sigma\delta/(\sigma^2-\delta^2)$. Then $q(0)=0$, $q$ strictly increases and has a vertical asymptote at $\delta=\sigma$. For $\delta>\sigma$, $q$ is also strictly increasing and negative and converges to zero as $\delta\rightarrow\infty$.

We now introduce the following notation, which will be used in the rest of the paper.  
\begin{equation}\label{interval}
I_0=\left(0,\frac{\pi}{2T}\right),\quad I_m=\left(\frac{(2m-1)\pi}{2T},\frac{(2m+1)\pi}{2T}\right), m\geq 1.
\end{equation}
The equation $p(\delta)=q(\delta)$ has  at most one solution in each interval $I_m$, $m\geq1$, obtained by the intersection of the negative branch of $q(\delta)$ with $p(\delta)$. Let $\delta_m$ be such a solution if it exists. 
See Fig. \ref{fig-pq}.

\begin{figure}[h]
\begin{center}
 \includegraphics[width=.6\textwidth]{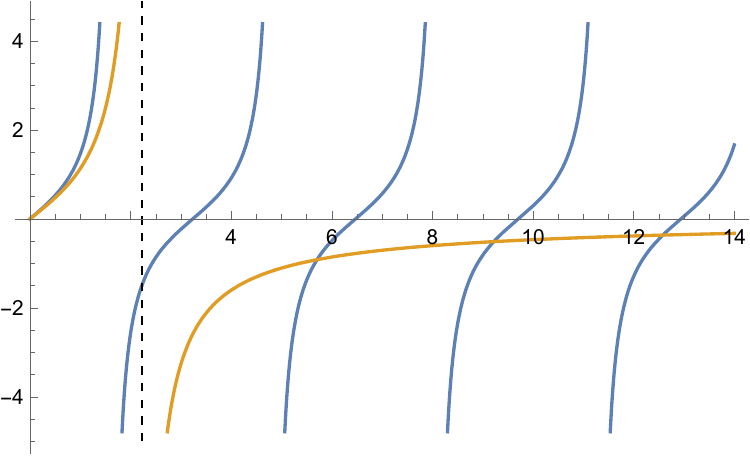} 
\end{center}
\caption{Solutions  of the equation $p(\delta)=q(\delta)$. The function $p=p(\delta)$ (blue) have vertical asymptotes at $\delta=\frac{(2m+1)\pi}{2T}$, $m\in\n$, and the function $q=q(\delta)$ (orange), only one at $\delta=\sigma$. This vertical asymptote appears in the figure as  the dotted line.}\label{fig-pq}
\end{figure}

Consequently, the solutions $\delta_m$ form a discrete set of positive numbers and $\delta_m\to\infty$. On the other hand, the eigenvalues are
$$
\lambda_{m,n}=\frac{\delta_m^2}{1+r^2}+\frac{n^2}{r^2}-\frac{1}{r^2(1+r^2)}.
$$
Hence, the negative ones appear only if $n=0$. In such a case,  we have $\lambda_{m,0}<0$ if and only if  $\delta_m<\frac{1}{r}$. 

{\it Claim}. There is no intersection between the graphic of $p(\delta)$ and the positive branch of $h(\delta)$. See Fig. \ref{fig-pq}.

To prove the claim, first we prove that   $\frac{\pi}{2T}<\sigma$. This inequality is equivalent to  $e^\pi<\left(\frac{\sigma+1}{\sigma-1}\right)^{2\sigma}$ for all $\sigma>1$. However the function in the right hand side of this inequality decreases until the value $e^4$. Analogously, it can be proved that $\sigma<\frac{\pi}{T}$, proving the claim.

As a consequence, if there is an intersection between $p(\delta)$ and the positive branch of $h(\delta)$, this must occur in the interval $(0,\frac{\pi}{2T})$, that is, with the first branch of $p(\delta)$. However we have  $p'(\delta)>q'(\delta)$ in $(0,\frac{\pi}{2T})$. Since $p(0)=q(0)$, this shows that $p>q$ in $(0,\frac{\pi}{2T})$.

We now prove the three items of the theorem.
\begin{enumerate}
\item   If $r\to 0$, then $\alpha\to 1$. This implies that the vertical asymptote $x=\sigma$ of $q$ moves to the left until the vertical line $x=H_0$ in the limit. Thus the negative branch of $q(\delta)$ intersects infinite times with $p(\delta)$. Since $\frac{1}{r}\to\infty$, the number of values $\delta_m$ satisfying   $\delta_m<\frac{1}{r}$  increases to $\infty$ as $r\to 0$.  
\item    If $r\to 1/\sqrt{H_0^2-1}$, then $\alpha\to 0$, hence $T\to 0$. In particular, there is $r_0>0$ such that if $r\in (r_0,\frac{1}{\sqrt{H_0^2-1}})$, then 
$$\sqrt{H_0^2-1}<\frac{1}{r}<\frac{\pi}{2T}.$$
Since the negative eigenvalues $\lambda_{m,0}$ are given by the solutions $\delta_m$ with $\delta_m<1/r$, then this solution would be between the positive branch of $q$ with the first branch of $p$. But this is not possible by the claim.
\item The set $\{\delta_{m}:m\in\n\}$ is   discrete  with $\delta_m\to\infty$. Since the negative eigenvalues correspond when   $\delta_m<1/r$, this number increases as $r$ decreases.  
\end{enumerate}

\end{enumerate}

 \end{proof}
 
 From (1)   of Thm. \ref{t-ib}, the Killing cylinder $C_r([t_{-},t_{+}])$ is not stable if $r$ is sufficiently close to $0$. In case that the index is $1$, we know from   \eqref{ii} that the weak index is less or equal than $1$. In fact,  Killing cylinders are not stable: see \cite[Thm. 5.3]{so}.

\section{Stability of  Killing cylinders contained in unbounded domains determined by horospheres, geodesic planes and equidistant surfaces}\label{s5}

   In this section, we consider unbounded pieces of Killing cylinders supported in horospheres, geodesic planes and equidistant surfaces. First, we begin when the support is a horosphere. In the upper halfspace model of $\h^3$, an after an isometry, a horosphere is a horizontal plane which determines two non-isometric domains in $\h^3$.   Without loss of generality, we assume that the horosphere is   $\H_1$ and   $W$ is the domain $z>1$ over this plane. Notice that the intersection of the boundary of $W$ with the ideal boundary of $\h^3$ is one point.

 Consider the Killing cylinder $C_R$. Then  $\H_1$ separates $C_R$ in two pieces and let $C_{R,+}=C_R\cap W$.  Since  $C_{R,+}$ is not compact, we consider an exhaustion of $C_{R,+}$ by cutting it   in pieces of length $T$ in the $t$-variable and letting $T\to\infty$. So, let $C_{R,+}^{T}=\Psi([0,T]\times [0,2\pi])$. See Fig. \ref{fig6}.
  
\begin{figure}[h]
\begin{center}
\includegraphics[width=.4\textwidth]{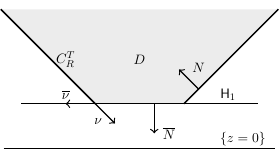}
\caption{The partitioning problem when the boundary of the Killing cylinder is included in a horosphere.}
\label{fig6}
\end{center}
\end{figure}

\begin{theorem}\label{t6} Let   $R,T>0$ and define
$$
\eta(T)=\max\{m\in\n\cup\{0\}: \delta_m<\frac{1}{\sinh R}\},
$$
where $\delta_m$ is the only solution of the equation $x=\tan(Tx)$ in each interval $I_m$ defined in \eqref{interval}, $m\geq 0$. Here $\delta_0$ is the root of the equation  in the interval $(0,\frac{\pi}{2T})$ in case that  $T>1$.    
Then   
$$\mathrm{index}(C_{R,+}^T)=\left\{\begin{array}{ll} \eta(T)& T< 1,\\
1+ \eta(T)& T\geq 1.
\end{array}\right.$$
In consequence, the Killing cylinder $C_{R,+}$ is not stable.
\end{theorem}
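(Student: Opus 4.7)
The plan is to adapt the separation-of-variables scheme used in the proofs of Thms. \ref{t3} and \ref{t4}. Parametrize $C_{R,+}^T=\Psi([0,T]\times[0,2\pi])$ as in \eqref{p}, with capillary boundary $\Gamma_R$ at $t=0$ on $\H_1$ and a cut-off circle at $t=T$; by the exhaustion convention \eqref{stable-infinito}, the eigenvalue problem \eqref{eq1} is considered with the Robin condition on $\Gamma_R$ and the Dirichlet condition at $t=T$. Since the contact-angle and second-fundamental-form data along $\Gamma_R$ coincide exactly with those already computed in the horosphere part of the proof of Thm. \ref{t4}, we obtain $\q=1/\sqrt{1+r^2}$, so that a separated eigenfunction $u(t,\theta)=f(t)g(\theta)$ must obey $f'(0)+f(0)=0$ and $f(T)=0$.

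Fourier separation on $\s^1$ yields $g_n(\theta)=c_1\cos(n\theta)+c_2\sin(n\theta)$ with separation constant $-n^2/r^2$, and $f$ solves \eqref{fn}. I would discuss the three subcases according to the sign of $K:=(1+r^2)(\varpi-n^2/r^2+\lambda)$. In the oscillatory case $K=\delta^2>0$, the Robin condition forces $f(t)\propto\delta\cos(\delta t)-\sin(\delta t)$, and the Dirichlet endpoint reduces to the transcendental equation $\tan(\delta T)=\delta$; a graphical analysis of $F(\delta)=\tan(\delta T)-\delta$ produces a unique root $\delta_m$ in every $I_m$ with $m\geq 1$, and one additional root in $I_0$ according to the sign of $F'(0)=T-1$. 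In the hyperbolic case $K=-\delta^2<0$ the same substitution leads to $\tanh(\delta T)=\delta$, which contributes a unique positive root in the complementary regime, and the degenerate case $K=0$ produces the admissible profile $f(t)=1-t$ exactly at the threshold $T=1$. The resulting eigenvalues take the form $\lambda_{m,n}=n^2/r^2-\varpi\pm\delta_m^2/(1+r^2)$, with the sign of the last term fixed by the case.

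An elementary inspection using $\delta^*<1$ in the hyperbolic case shows $\lambda_{m,n}>0$ for every $n\geq 1$, so only the zero angular mode $n=0$ contributes negative eigenvalues, and $\lambda_{m,0}<0$ is equivalent to $\delta_m<1/\sinh R$. Collecting and counting the resulting negative eigenvalues across the oscillatory, hyperbolic and degenerate subcases yields the piecewise formula for $\mathrm{index}(C_{R,+}^T)$ claimed in the theorem. Finally, because each $\delta_m$ lies in $I_m=((2m-1)\pi/(2T),(2m+1)\pi/(2T))$, the inequality $\delta_m<1/\sinh R$ is met by asymptotically $T/(\pi\sinh R)$ indices, so $\mathrm{index}(C_{R,+}^T)\to\infty$ as $T\to\infty$ and $C_{R,+}$ is not stable by \eqref{stable-infinito}.

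The main technical difficulty I anticipate is the bookkeeping at the threshold $T=1$: one must keep track of how the radial spectrum transitions from the oscillatory to the hyperbolic regime through the degenerate case, and how this transition manifests in the counting function $\eta(T)$ and in the extra $+1$ appearing in the formula for $T\geq 1$, verifying in particular that the limit eigenvalues from both sides of $T=1$ match continuously with the degenerate eigenvalue at $T=1$. All remaining steps are routine variants of the separation-of-variables calculations already executed in Thms. \ref{t3} and \ref{t4}.
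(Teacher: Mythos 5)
Your proposal follows essentially the same route as the paper's proof: separation of variables with the Robin condition $f'(0)+f(0)=0$ (coming from $\q=1/\sqrt{1+r^2}$) and the Dirichlet condition $f(T)=0$, the same three-way case split leading to $\tan(\delta T)=\delta$, $\tanh(\delta T)=\delta$ and the linear profile $f(t)=1-t$ at $T=1$, and the same count of negative eigenvalues via $\delta_m<1/\sinh R$ with only the $n=0$ angular mode contributing. The bookkeeping across the threshold $T=1$ that you flag is handled in the paper exactly as you anticipate (the oscillatory $I_0$ root for $T<1$, the hyperbolic root for $T>1$, the degenerate mode at $T=1$), so no new idea is needed.
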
 

\begin{proof} As usual, let $r=\sinh R$. We have $N$ given in \eqref{n} and $\overline{N}=-\e_3$. The contact angle $\gamma\in (\pi/2,\pi)$ satisfies  $\cos\gamma=-r/\sqrt{1+r^2}$ and $\sin\gamma=1/\sqrt{1+r^2}$. The conormal vectors along $\partial\Sigma$ are
$$\nu=-\frac{1}{\sqrt{1+r^2}}(r\cos\theta,r\sin\theta,1),\quad\overline{\nu}=(\cos\theta,\sin\theta,0).$$
We compute $A(\nu,\nu)$. Using \eqref{r} and \eqref{n}, we get 
$$A(\nu,\nu)=-\langle\nabla^e_\nu N,\nu\rangle+\frac{N_3}{z}=\frac{N_3}{z}=\frac{r}{\sqrt{1+r^2}}.$$
Since $\overline{A}(\overline{\nu},\overline{\nu})=1$, we have
$$\q=\frac{1}{\sin\gamma}+\cot\gamma \frac{r}{\sqrt{1+r^2}}=\frac{1}{\sqrt{1+r^2}}.$$
 Using again separation of variables, $u(t,\theta)=f(t)g(\theta)$ and since $\frac{\partial u}{\partial\nu}= -f'g/\sqrt{1+r^2}$, the eigenvalue problem is   as \eqref{eqd3} but with  boundary conditions 

\begin{equation*}
\left\lbrace\begin{array}{ll}
f'(0)+ f(0)=0, &\\
f(T)=0.&
\end{array}\right.
\end{equation*}
We known that $g$ is $g_n$ as in \eqref{gn} and that $f$ satisfies \eqref{fn}.  
\begin{enumerate}
\item Case $ \varpi-\frac{n^2}{r^2}+\lambda =0$. Then $f(t)=a+bt$. The boundary conditions imply that there is a non-trivial $f$ if and only if $T=1$. The eigenvalues are  $ \frac{n^2}{r^2}-\varpi$ and only negative eigenvalues   appear if $n=0$. Thus if $T=1$, there is one in the contribution of the Morse index; otherwise, this case is not possible.  
\item Case $(1+r^2)(\varpi-\frac{n^2}{r^2}+\lambda)=-\delta^2<0$, $\delta>0$. Then $f(t)=c_3\cosh(\delta t)+c_4\sinh(\delta t)$. The boundary conditions imply 
$$\delta c_4+c_3=0,\quad \cosh (\delta T)c_3+\sinh(\delta T)c_4=0.$$
Thus  $f(t)=-\delta\cosh(\delta t)+\sinh(\delta t)$, where $\tanh(\delta T)=\delta$. This equation has a unique nonzero solution $\delta_o$ if and only $T>1$. Moreover $\delta_o<1$.  So, if   $T>1$  the eigenfunctions are
$$
u_n(t,\theta)=(-\delta\cosh(\delta t)+\sinh(\delta t))g_n(\theta),
$$
and the eigenvalues are  
$$
\lambda_n=-\frac{\delta_o^2r^2+1}{r^2(1+r^2)}+\frac{n^2}{r^2}.
$$
Note that $\lambda_0<0$ and $\lambda_n>0$ for every $n\geq 1$ because $\delta_o<1$. Hence, the only negative eigenvalue is $\lambda_0$ ($n=0$) which contributes with $1$ in the index if and only if $T>1$.

\item Case $(1+r^2)(\varpi-\frac{n^2}{r^2}+\lambda)=\delta^2>0$, $\delta>0$. Then $f(t)=c_3\cos(\delta t)+c_4\sin(\delta t)$. The boundary conditions imply 
$$c_3+\delta c_4=0,\quad \cos (\delta T)c_3 +\sin(\delta T)c_4=0.$$
Thus  $f(t)=-\delta\cos(\delta t)+\sin(\delta t)$, where $\tan(T\delta)=\delta$. For $m\geq 1$, in each interval $I_m$ of \eqref{interval}, there is only one root $\delta_m$ of   the equation $x=\tan (Tx)$. In the interval $(0,\frac{\pi}{2T})$ the existence of a root $\delta_0$ is assured if and only if $T<1$. In any caes, the eigenfunctions are
\[u_{m,n}(t,\theta)=(-\delta_m\cos(\delta_m t)+\sin(\delta_m t))g_n(\theta) \]
and the eigenvalues are
$$\lambda_{m,n}=\frac{\delta_m^2}{1+r^2}+\frac{n^2}{r^2}-\frac{1}{r^2(1+r^2)}.$$
For $n\geq1$ all the eigenvalues are positive, hence the negative ones appear when $n=0$. Therefore, the eigenvalues are negative if and only if  
 $ \delta_m<\frac{1}{r}$.

\end{enumerate} 

Now the result follows by recalling that $r=\sinh R$ and taking into account the contribution of negative eigenvalues of both cases depending if $T< 1$ or $T\geq1$. 

For the last statement, notice that as $T\to\infty$, the width of the intervals $I_m$ goes to $0$. Thus, once fixed $R$, the number of roots $\delta_m$ satisfying $\delta_m< 1/\sinh R$ goes to $\infty$ as $T\to 0$.

\end{proof}

The next umbilical surface is a totally geodesic plane. Viewed this surface as a hemisphere intersecting orthogonally  the plane $z=0$, the hemisphere determines two domains in $\r^3_+$: one bounded, the inside of the hemisphere and denoted by $W\subset\r^3_+$; and the other unbounded. In this case, both domains having the hemisphere as common boundary are isometric.

Consider the Killing cylinder  $C_R$ as in Thm. \ref{t6} but supported on the totally geodesic plane  $\S_{\tau_0}$, where $\tau_0=\sqrt{1+r^2}$. The domain $W$ will be the part above $\S_{\tau_0}$ and $D$ is the inside of $C_{R,+}$. Notice that this $D$ does not coincide with that of Thm. \ref{t6}. Let $C_{R,+}=C_R\cap W$, which it is the same as in Thm. \ref{t6} and let $C_{R,+}^T=C_{R,+}\cap C_R^T$. See Fig. \ref{fig7}. In the next result we establish the critical length that defines the instability of Plateau-Rayleigh type.
  
\begin{figure}[h]
\begin{center}
\includegraphics[width=.4\textwidth]{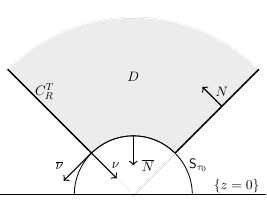}
\caption{The partitioning problem when the boundary of the Killing cylinder is included in a geodesic plane.}
\label{fig7}
\end{center}
\end{figure}

\begin{theorem}\label{t7} Let be $R,T>0$ and define
$$
\eta(T)=\max\{m\in\n\cup\{0\}: 2m+1<\frac{2T}{\pi\sinh R}\}.
$$
Consider $C_{R,+}^T$ as a capillary surface supported in $\S_{\tau_0}$. Then, $\mathrm{index}(C_{R,+}^T)=\eta(T)$. Moreover, if $T>3\pi\sinh R/2$, then $C_{R,+}^T$ is not stable. In consequence, $C_{R,+}$ is not stable.
\end{theorem}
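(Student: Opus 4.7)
The plan is to adapt the strategy of Thm.~\ref{t6} to the totally geodesic support $\S_{\tau_0}$. The key geometric observation, which distinguishes this case from the horospherical one, is that the Killing cylinder $C_R$ meets $\S_{\tau_0}$ orthogonally along $\Gamma_R$ at $t=0$. Indeed, using \eqref{n} for $N$ and the Euclidean outward unit normal of $\S_{\tau_0}$, which along $\Gamma_R$ is proportional to $(r\cos\theta,r\sin\theta,1)$, a direct computation gives $\langle N,\overline{N}\rangle=0$ and hence $\gamma=\pi/2$. Since $\S_{\tau_0}$ is totally geodesic we have $\overline{A}\equiv 0$, and together with $\cot\gamma=0$, formula \eqref{qq} yields $\q\equiv 0$ along $\partial C_{R,+}^T\cap\S_{\tau_0}$.

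With these data in hand, the Robin condition $\partial u/\partial\nu-\q u=0$ at $t=0$ collapses to the pure Neumann condition $\partial u/\partial\nu=0$; writing $\nu_0=-\Psi_t/\sqrt{1+r^2}$ and separating variables $u(t,\theta)=f(t)g(\theta)$, this becomes $f'(0)=0$. At the artificial cutoff $t=T$, needed for the exhaustion scheme \eqref{stable-infinito}, I would impose the Dirichlet condition $f(T)=0$ exactly as in Thm.~\ref{t6}. Periodicity in $\theta$ again forces $g=g_n$ of \eqref{gn}, so $f$ satisfies \eqref{fn} with $k^2=(1+r^2)(\varpi-n^2/r^2+\lambda)$.

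The analysis of $f$ is then short: the degenerate case $k^2=0$ gives $f(t)=a+bt$, killed by $f'(0)=f(T)=0$; the hyperbolic case $k^2=-\delta^2<0$ gives $f=c_3\cosh(\delta t)+c_4\sinh(\delta t)$, but $f'(0)=0$ forces $c_4=0$ and then $f(T)=0$ forces $c_3=0$, so no non-trivial solution exists; only the trigonometric case $k^2=\delta^2>0$ survives, yielding $f(t)=\cos(\delta t)$ and, via $\cos(\delta T)=0$, the discrete set $\delta_m=(2m+1)\pi/(2T)$, $m\in\n\cup\{0\}$. The corresponding eigenvalues are
$$
\lambda_{m,n}=\frac{\delta_m^2}{1+r^2}+\frac{n^2}{r^2}-\frac{1}{r^2(1+r^2)},
$$
and only the $n=0$ branch can be negative; the inequality $\lambda_{m,0}<0$ is equivalent to $\delta_m<1/\sinh R$, i.e.\ $2m+1<2T/(\pi\sinh R)$. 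Counting these indices $m$ produces $\mathrm{index}(C_{R,+}^T)=\eta(T)$. In particular, the threshold $T>3\pi\sinh R/2$ guarantees that both $m=0$ and $m=1$ yield negative eigenvalues, so $\mathrm{index}\geq 2$ and $C_{R,+}^T$ fails to be stable. Passing to the limit $T\to\infty$ via \eqref{stable-infinito} produces infinitely many admissible $m$ and gives the instability of $C_{R,+}$.

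The main obstacle is the orthogonality identity $\gamma=\pi/2$: not because of its difficulty but because the entire simplification hinges on it. Conceptually it reflects the general fact that the equidistant cylinder $C_R$ around the $z$-axis meets any totally geodesic plane perpendicular to that axis at a right angle, and $\S_{\tau_0}$ is precisely such a plane (meeting the $z$-axis at $(0,0,\tau_0)$). Once this is secured, the rest of the argument is a clean simplification of the horospherical case of Thm.~\ref{t6}: the hyperbolic branch that contributed an extra negative eigenvalue in the horospherical setting disappears here, since the Neumann-Dirichlet pair is too restrictive for non-trivial $\cosh/\sinh$ solutions.
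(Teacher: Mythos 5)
Your proposal follows the paper's proof exactly: $\gamma=\pi/2$ and $\q=0$ on the totally geodesic support, separation of variables with $f'(0)=f(T)=0$, only the trigonometric branch $\delta_m=(2m+1)\pi/(2T)$ surviving, and negative eigenvalues precisely for $n=0$ and $\delta_m<1/\sinh R$; you are in fact more careful than the paper in explicitly discarding the linear and hyperbolic branches. One bookkeeping point (shared with the paper's own statement): the admissible values are $m=0,1,\dots,\eta(T)$, i.e.\ $\eta(T)+1$ negative eigenvalues, which is what makes your (correct) observation that $m=0$ and $m=1$ both contribute when $T>3\pi\sinh R/2$, hence $\mathrm{index}\ge 2$ and instability, consistent --- so the literal identity $\mathrm{index}=\eta(T)$ in your write-up is off by one from your own count.
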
 

\begin{proof}
The arguments are similar as in Thm. \ref{t6} and we only give the main facts. Now $\gamma=\pi/2$, $ \overline{A}=0$ and $\q=0$.  By separation of variables,   the eigenvalue problem is as in the proof of Thm. \ref{t6} but the boundary conditions are $f'(0)=f(T)=0$.  The functions $g_n$ are as in \eqref{gn}  and $f=f_m$ appears in the case  $(1+r^2)(\varpi-\frac{n^2}{r^2}+\lambda)=\delta^2>0$. Then $u_{m,n}(t,\theta)=\cos(\frac{(2m+1)\pi}{2T} t)g_n(\theta)$, $m,n\in\n$,  and the eigenvalues are  
$$\lambda_{m,n}=\frac{(2m+1)^2\pi^2}{4T^2(1+r^2)}+\frac{n^2}{r^2}-\varpi.$$
If $n\geq 1$, then $\lambda_{m,n}>0$. Thus the negative eigenvalues appear if $n=0$.  In particular, as $T\to\infty$, there are many numbers $m$ such that $\lambda_{m,0}<0$, proving that $C_{R,+}$ is not stable. This is the last statement of the theorem.

The second eigenvalue $\lambda_{1,0}$ is negative if and only if  $T>3\pi\sinh R/2$, obtaining the critical length for the instability of Plateau-Rayleigh type. 

 \end{proof}

Finally, assume that the boundary of the Killing cylinder is included in an equidistant surface. For computational reasons, it is better to look the equidistant surfaces as Euclidean spherical caps. Thus, let us suppose that the equidistant surfaces $\E_{H_0}$ is the spherical cap in $\r^3_{+}$  of radius $\rho$ centered at $(0,0,-c)$, $\rho>c>0$, whose mean is $H_0=c/\rho$. We consider a Killing cylinder $C_R$ which we can assume that intersects $\E_{H_0}$ at $z=1$, hence $(1+c)^2+r^2=\rho^2$. See Fig. \ref{fig8}. We introduce the notation
$$
\Theta=\frac{H_0}{\sqrt{1+r^2(1-H_0^2)}}.
$$

\begin{figure}[h]
\begin{center}
\includegraphics[width=.4\textwidth]{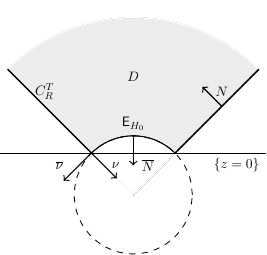}
\caption{The partitioning problem when the boundary of the Killing cylinder is included in an equidistant surface.}
\label{fig8}
\end{center}
\end{figure}

\begin{theorem}\label{t8}
Let be $R,T>0$ and define
$$
\eta(T)=\max\{m\in\n\cup\{0\}: \delta_m<\frac{1}{\sinh R}\},
$$
where $\delta_m$ is the only solution of the equation $x/\Theta=\tan(Tx)$ in each interval $I_m$, $m\geq 0$. Here $\delta_0$ is the root of the equation  in the interval $(0,\frac{\pi}{2T})$ in case that  $T\Theta>1$.    
Consider $C_{R,+}^T$ as a capillary surface supported in $\E_{H_0}$.  Then   
$$\mathrm{index}(C_{R,+}^T)=\left\{\begin{array}{ll} \eta(T)& T\Theta< 1,\\
1+ \eta(T)& T\Theta\geq 1.
\end{array}\right.$$
In consequence, the Killing cylinder $C_{R,+}$ is not stable.
\end{theorem}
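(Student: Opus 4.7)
The plan is to follow the template of Thms. \ref{t6} and \ref{t7} essentially verbatim: set up the Jacobi eigenvalue problem \eqref{eq1} on the exhaustion piece $C_{R,+}^T = \Psi([0,T]\times [0,2\pi])$, apply separation of variables $u(t,\theta) = f(t)g(\theta)$, and reduce to a one-dimensional ODE in $t$. The only new input is the capillarity quantity $\q$ at the intersection circle $\Gamma_R = C_R \cap \E_{H_0}$ located at $z=1$.

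First I would compute the geometric data at $\Gamma_R$. Using the spherical-cap representation of $\E_{H_0}$ centered at $(0,0,-c)$, the hyperbolic unit normal of $\E_{H_0}$ along $\Gamma_R$ is $\overline{N} = \frac{1}{\rho}(r\cos\theta, r\sin\theta, 1+c)$; pairing against the cylinder normal $N$ from \eqref{n} yields
\begin{equation*}
\cos\gamma = -\frac{rH_0}{\sqrt{1+r^2}}, \qquad \sin\gamma = \frac{\sqrt{1+r^2(1-H_0^2)}}{\sqrt{1+r^2}}.
\end{equation*}
Since $\E_{H_0}$ is umbilical, $\overline{A}(\overline{\nu},\overline{\nu}) = H_0$, and combining this with $A(\nu,\nu) = r/\sqrt{1+r^2}$ from \eqref{eqAcilindro} reproduces the computation of Thm. \ref{t-ib} and gives
\begin{equation*}
\q = \frac{H_0}{\sqrt{1+r^2(1-H_0^2)}\,\sqrt{1+r^2}} = \frac{\Theta}{\sqrt{1+r^2}}.
\end{equation*}

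Next, separating variables forces $g = g_n$ as in \eqref{gn} and $f$ satisfies \eqref{fn}. The Robin condition at $t=0$ translates, via $\frac{\partial u}{\partial \nu_0} = -f'g/\sqrt{1+r^2}$ as computed in Thm. \ref{t6}, to $f'(0) + \Theta f(0) = 0$, while the exhaustion gives $f(T)=0$. I would then split into three cases according to the sign of $(1+r^2)(\varpi - n^2/r^2 + \lambda)$: the zero case yields nontrivial solutions only when $\Theta T = 1$; the negative case $-\delta^2$ reduces to $\tanh(\delta T) = \delta/\Theta$, which admits a unique positive root $\delta_o$ iff $T\Theta > 1$, contributing one negative eigenvalue (at $n=0$); and the positive case $\delta^2$ gives $\tan(\delta T) = \delta/\Theta$, equivalently $x/\Theta = \tan(Tx)$, with one root $\delta_m$ in each interval $I_m$ for $m \geq 1$ and an additional root in $I_0$ exactly when $T\Theta < 1$.

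Finally, as in Thm. \ref{t6}, the eigenvalue formula $\lambda_{m,n} = \delta_m^2/(1+r^2) + n^2/r^2 - \varpi$ is positive for all $n \geq 1$, so negative eigenvalues occur only at $n=0$, where $\lambda_{m,0} < 0$ is equivalent to $\delta_m < 1/\sinh R$. Counting these yields $\eta(T)$, to which one adds the $+1$ from the hyperbolic case when $T\Theta \geq 1$, producing the stated index formula. The instability of $C_{R,+}$ then follows from \eqref{stable-infinito}: as $T \to \infty$ the intervals $I_m$ shrink and the count of $\delta_m < 1/\sinh R$ grows unboundedly. The main obstacle I anticipate is bookkeeping of signs when computing $\q$: fixing the correct orientations of $\overline{N}$ and of the conormals $\nu, \overline{\nu}$ in the spherical-cap model of the equidistant surface, and confirming that the resulting sign of $\cos\gamma$ matches the pattern of Thm. \ref{t-ib}. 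Once these are pinned down, the argument is a line-by-line transcription of Thm. \ref{t6} with the constant $1$ replaced by $\Theta$.
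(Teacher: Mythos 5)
Your proposal is correct and follows essentially the same route as the paper's proof: the same computation of $\q=\Theta/\sqrt{1+r^2}$, the same boundary conditions $f'(0)+\Theta f(0)=0$, $f(T)=0$, and the same three-case analysis of \eqref{fn} leading to $\tanh(\delta T)=\delta/\Theta$ and $\tan(\delta T)=\delta/\Theta$. Your criterion $T\Theta<1$ for the existence of the root $\delta_0\in I_0$ of $x/\Theta=\tan(Tx)$ is the correct one (it is the slope comparison at the origin, consistent with the proof of Thm.~\ref{t6}); the paper writes $T\Theta>1$ at that point, which appears to be a slip.
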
 

\begin{proof} First we compute the function $\q$.  Notice that $\overline{A}(\overline{\nu},\overline{\nu})=H_0$. The unit normals $N_e,\overline{N_e}$, of $C_R$ and $\mathsf{E}_{H_0}$  are 
$$
N_e=\frac{1}{\sqrt{1+r^2}}(-\cos\theta,-\sin\theta,r),\quad \overline{N_e}(x,y,z)=-\frac{1}{\rho}(x,y,z+c).
$$
The angle $\gamma$ between $N_e$ and $\overline{N_e}$ is determined by
$$\cos\gamma=-\frac{rH_0}{\sqrt{1+r^2}},\quad\sin\gamma=\frac{\sqrt{1+r^2(1-H_0^2)}}{\sqrt{1+r^2}},$$
hence
$$ \q=\frac{H_0}{\sin\gamma}-\frac{r^2H_0^2}{(1+r^2)\sin\gamma}=\frac{\Theta}{\sqrt{1+r^2}}.$$
 If $u(t,\theta)=f(t)g(\theta)$,   the boundary conditions are
\begin{equation*}
\left\lbrace\begin{array}{ll}
f'(0)+ \Theta  f(0)=0, &\\
f(T)=0.&
\end{array}\right.
\end{equation*}
The solutions $g_n$ are given in \eqref{gn}. On the other hand,  $f$ satisfies \eqref{fn}.  
\begin{enumerate}
\item Case $ \varpi-\frac{n^2}{r^2}+\lambda =0$. If $f(t)=a+bt$, then there are non-trivial $a,b\in\r$ if and only if $T=1/\Theta $. In such a case, 
the eigenvalues are $ \frac{n^2}{r^2}-\varpi$ and the negative eigenvalues  appear if $n=0$. Hence we deduce that if $T=1/\Theta $, there is  a contribution in one in the    Morse index. Otherwise, this case is not possible. 
\item Case $(1+r^2)(\varpi-\frac{n^2}{r^2}+\lambda)=-\delta^2<0$, $\delta>0$. Then $f(t)=c_3\cosh(\delta t)+c_4\sinh(\delta t)$ and the boundary conditions imply 
$$c_3 \Theta +\delta c_4=0,\quad \cosh (\delta T)c_3 +\sinh(\delta T)c_4=0.$$
Thus  $f(t)=-\frac{\delta}{\Theta }\cosh(\delta t)+\sinh(\delta t)$, where $\tanh(\delta T)=\delta/\Theta $. This equation has a unique nonzero solution $\delta_o$ if and only if $T\Theta >1$ and, in such a case, $\delta_o<1$. The eigenvalues are  
$$
\lambda_n=-\frac{\delta_o^2r^2+1}{r^2(1+r^2)}+\frac{n^2}{r^2}.
$$
Besides  $\lambda_0<0$, we have  $\lambda_n>0$ for every $n\geq 1$ because $\delta_o<1$.  Thus  the only negative eigenvalue is $\lambda_0$   which contributes in $1$ in the index if and only if $T\Theta>1$.

\item Case $(1+r^2)(\varpi-\frac{n^2}{r^2}+\lambda)=\delta^2>0$, $\delta>0$. Then $f(t)=c_3\cos(\delta t)+c_4\sin(\delta t)$. The boundary conditions imply 
$$\Theta  c_3+\delta c_4=0,\quad \cos (\delta T)c_3+\sin(\delta T)c_4=0.$$
Thus  $f(t)=-\frac{\delta}{\Theta }\cos(\delta t)+\sin(\delta t)$, where $\tan(T\delta)=\delta/\Theta $. In each interval $I_m$, $m\geq 1$,  there is only one root $\delta_m$ of   the equation $x/\Theta =\tan (Tx)$. In the interval $(0,\frac{\pi}{2T})$ the existence of a root $\delta_0$ is assured if and only if $T\Theta >1$. The eigenvalues are
$$\lambda_{m,n}=\frac{\delta_m^2}{1+r^2}+\frac{n^2}{r^2}-\frac{1}{r^2(1+r^2)}.$$
The negative eigenvalues appear if $n=0$. Hence, the eigenvalue is negative if and only if  
 $ \delta_m<\frac{1}{r}$.

\end{enumerate} 

Now the result follows as in Thm. \ref{t6}.  
 
\end{proof}
 
\section{The Delaunay surfaces obtained by bifurcating Killing cylinders}\label{s6}

In the hyperbolic space $\h^3$, a Delaunay surface is a rotational surface of elliptic type and with constant mean curvature $H$, with $|H|>1$  \cite{cd}. In the upper half space model of $\h^3$,   we can suppose that the rotation axis is the $z$-axis.  In this section we will use the Crandall-Rabinowitz  bifurcation theorem by simple eigenvalues to prove that the Delaunay surfaces can be obtained by  bifurcating Killing cylinders. In Euclidean space, it is known that a family of unduloids bifurcates off from the circular cylinder \cite{vo01}.  Here we follow the scheme  in \cite{ss} for the Euclidean case. Other results of bifurcation of rotational cmc surfaces in hyperbolic space have appeared in   \cite{jl,maz}.

Consider the Killing cylinder $C_R\subset\h^3$ of radius $R>0$ and let $H_0$ be its mean curvature. If $u$ is a smooth function defined on $C_R$, let $C_R(u)$ denote the normal graph on $C_R$ defined by     $C_R(u)=\{\mbox{exp}(p+u(p)N(p))\in\h^3:p\in C_R\}$. For convenience in the notation, we write  $p=\Psi(t,\theta)$, $\theta\in\s^1$ and we identify $u$ with $u\circ\Psi$. 
 Let    $X=C^\infty(\r/2\pi\z\times\s^1)$ be  the space  of smooth functions $u=u(t,\theta)$ defined in $\r\times\s^1$ that are $2\pi$-periodic in the   coordinate $t\in\r$.  

For each $T>0$ and $u\in X$, define   $\widetilde{u}_T(t,\theta)=u(\frac{2\pi}{T}t,\theta)$ which is $T$-periodic in the first variable. If $u>-\sinh R$ then $C_R(\widetilde{u}_T)$ is a regular surface parametrized by $(t,\theta)\in\r\times \s^1$. Let $H_T$ be the mean curvature of $C_R(\widetilde{u}_T)$. 

Let $V\subset X$ be an open set such that  $0\in V$ and $C_R(\widetilde{u}_T)$ is a normal graph on $C_R$ for all $u\in V$. Define
\begin{equation}\label{ff}
\mathcal{F}:V \times\r \rightarrow  X  ,\quad  \mathcal{F}(u,T)(t,\theta)=2\left(H_0-H_T\left(\frac{T}{2\pi}t,\theta\right)\right). 
\end{equation}
Notice that $\mathcal{F}(u,T)$ is $2\pi$-periodic in the second variable because $H_T$ is $T$-periodic on $t$. Consider the solutions of 
\begin{equation}\label{s}
\mathcal{F}(u,T)=0.
\end{equation}
Notice that $\mathcal{F}(0,T)=0$ for all $T$ because for   $\widetilde{u}_T=0$, the surface $C_R(\widetilde{u}_T)$ is the cylinder $C_R$ whose mean curvature is $H_0$. This gives a curve  $T\mapsto (0,T)$ of solutions of \eqref{s}. The purpose is to prove that there exists $T_0>0$ such that it is possible to perturb this curve  around a point $(0,T_0)$  obtaining    another curve  $s\mapsto (w_s,t(s))$ of solutions of \eqref{s} crossing the point $(0,T_0)$.  

By the   implicit function theorem, non-uniqueness  of \eqref{s} implies that $\mbox{Ker}( D_u \mathcal{F}(0,T))$ is not trivial. Notice that if $H(u)$ is the mean curvature of $C_R(u)$, then the   derivative of $(u,t)\mapsto 2(H_0-H(u))$ at $(0,T)$ is   nothing but the   Jacobi operator $J$ of $C_R$, that is, 
$$J=\Delta  + |A|^2-2 =\frac{1}{1+r^2}\partial_t^2+\frac{1}{r^2}\partial_{\theta}^2+\varpi.$$
 Using the chain rule and the definition of $\mathcal{F}$ in \eqref{ff}, we have  
\begin{equation}\label{l1}
L:= D_u\mathcal{F}(0,T)  =\left(\frac{2\pi}{T}\right)^2\frac{1}{1+r^2}\partial_t^2+\frac{1}{r^2}\partial_{\theta}^2+\varpi.
\end{equation}
In consequence, the kernel of $L$ is not trivial   if and only if   $0$ is an eigenvalue of $L$.   The result of  bifurcation of   Crandall and Rabinowitz   in its abstract version is the following \cite{cr}.

\begin{theorem}\label{t-bifu} Let $X$ and $Y$ be two real Banach spaces, $V$ an open subset of $0$ in $X$ and $\mathcal{F}:V\times I\rightarrow Y$ be a twice continuously Fr\'echet differentiable functional, where $I\subset \r$ is an open interval.    Let $t_0\in I$ and $u\in V$. Assume that $\mathcal{F}(0,t)=0$ for all $t\in I$ and the following conditions hold:  
\begin{enumerate}
\item $\mbox{dim Ker}(D_u  \mathcal{F}(0,t_0))=1$. Assume that Ker$(D_u \mathcal{F}(0,t_0))$ is spanned by $u_0$.
\item The codimension of  $\mbox{rank } D_u \mathcal{F}(0,t_0) $ is $1$.
\item $D_tD_u \mathcal{F}(0,t_0)(u_0)\not\in\mbox{rank }D_u \mathcal{F}(0,t_0) $.
\end{enumerate}
Then  there exists a   continuously differentiable curve $s\mapsto (w_s,t(s))$, $s\in(-\epsilon,\epsilon)$, with $w_0=0$, $t(0)=t_0$, such that $\mathcal{F}(w_s,t(s))=0$, for any $|s|<\epsilon$. Moreover, $(0,t_0)$ is a bifurcation point of the equation $\mathcal{F}(u,t)=0$ in the following sense: in a neighborhood  of $(0,t_0)$ in $V\times I$, the set of solutions of $\mathcal{F}(u,t)=0$ consists only of the curve $t\mapsto (0,t)$ and the curve $s\mapsto (w_s,t(s))$.
\end{theorem}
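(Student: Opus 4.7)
The plan is a classical Lyapunov--Schmidt reduction. The two Fredholm hypotheses (1) and (2) allow us to decompose the equation $\mathcal{F}(u,t)=0$ into an auxiliary component that can be solved by the ordinary implicit function theorem and a scalar ``bifurcation equation'' where the transversality hypothesis (3) forces a unique non-trivial branch crossing the trivial one at $(0,t_0)$.

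Set $L_0 = D_u\mathcal{F}(0,t_0)$. By (1) choose a closed complement $Z \subset X$ so that $X = \mathrm{span}(u_0)\oplus Z$, and by (2) choose a one-dimensional complement $W \subset Y$ of the range $R(L_0)$, with projection $Q\colon Y\to W$ along $R(L_0)$. Then $L_0|_Z \colon Z \to R(L_0)$ is a Banach space isomorphism. Writing $u = su_0 + v$ with $s\in\r$ and $v\in Z$, the equation $\mathcal{F}(u,t) = 0$ splits into $(I-Q)\mathcal{F}(su_0+v, t) = 0$ and $Q\mathcal{F}(su_0+v,t)=0$. The linearization in $v$ of the first equation at $(s,v,t)=(0,0,t_0)$ is exactly $L_0|_Z$, which is invertible; since $\mathcal{F}(0,t)\equiv 0$ already provides the solution $v=0$ when $s=0$, the implicit function theorem yields a smooth $v = v(s,t)$ defined near $(0,t_0)$ with $v(0,t) \equiv 0$.

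Substituting produces the scalar bifurcation equation $\psi(s,t) := Q\mathcal{F}(su_0 + v(s,t),t)$, taking values in the one-dimensional space $W\cong \r$. Because $\psi(0,t)\equiv 0$, one can write $\psi(s,t) = s\,\varphi(s,t)$ with $\varphi$ smooth. Differentiating the first split equation in $s$ at $s=0$ and using $L_0 u_0 = 0$ together with the injectivity of $L_0|_Z$ forces $\partial_s v(0,t_0) = 0$. Hence $\varphi(0,t_0) = \partial_s\psi(0,t_0) = Q L_0 u_0 = 0$. A further differentiation gives
\[
\partial_t \varphi(0,t_0) \;=\; \partial_t\partial_s \psi(0,t_0) \;=\; Q\bigl(D_tD_u\mathcal{F}(0,t_0)\,u_0\bigr),
\]
which is non-zero precisely by hypothesis (3); the competing term $Q L_0 \partial_t\partial_s v(0,t_0)$ vanishes because $L_0$ maps $Z$ into $R(L_0)=\mathrm{Ker}\,Q$. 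Applying the implicit function theorem to $\varphi(s,t)=0$ at $(0,t_0)$ supplies a smooth curve $s\mapsto t(s)$ with $t(0)=t_0$, and $w_s := su_0 + v(s,t(s))$ is the bifurcating branch.

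For the local uniqueness assertion, any solution of $\mathcal{F}(u,t)=0$ near $(0,t_0)$ decomposes as $u=su_0+v$, is forced to satisfy $v=v(s,t)$ by the uniqueness in the first IFT step, and must then lie either on the trivial branch $s=0$ or on the curve $t=t(s)$ by the uniqueness in the second. The main technical obstacle is the book-keeping of derivatives of the implicit function $v(s,t)$: the identity $\partial_s v(0,t_0)=0$, which is what reduces the mixed derivative $\partial_t\partial_s \psi(0,t_0)$ cleanly to $Q\,D_tD_u\mathcal{F}(0,t_0)\,u_0$, is the mechanism that converts the abstract transversality hypothesis (3) into exactly the non-degeneracy needed to close the argument.
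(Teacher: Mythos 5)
The paper does not prove this statement at all: it is quoted as the classical Crandall--Rabinowitz bifurcation theorem and supported only by the citation \cite{cr}. Your Lyapunov--Schmidt reduction is exactly the standard proof from that reference and is correct, including the key points that the finite codimension of the range makes it closed (so the projection $Q$ and the isomorphism $L_0|_Z$ exist) and that the $C^2$ hypothesis on $\mathcal{F}$ is what makes the quotient $\varphi$ of class $C^1$ so the second implicit function theorem step applies.
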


 With   $C^{2,\alpha}$ and $C^{0,\alpha}$, we denote the functions that have  H\"{o}lder continuous second derivatives or that are  H\"{o}lder continuous, respectively. In order to apply Thm. \ref{t-bifu}, let $X= C^{2,\alpha}( \r/2\pi\z \times\s^1)$ and $Y= C^{0,\alpha}( \r/2\pi\z \times\s^1)$. The first step is to calculate the eigenspace $\ee_0$ of the eigenvalue $\lambda=0$ for $L$. The eigenvalue problem to solve is  
\begin{equation}\label{eqb}
Lu+\lambda u=0\  \mathrm{ in }\ \r/2\pi\z\times\s^1.
\end{equation}
We expand $u$ by its Fourier expression 
$$u(t,\theta)=\sum_{n=0}^\infty f_n(t)\cos(n\theta)+\sum_{n=1}^\infty g_n(t)\sin(n\theta).$$
By    \eqref{l1}, the function $u$ satisfies   \eqref{eqb} if and only if
\begin{equation*}
\begin{split}
&\sum_{n=0}^\infty \left(f_n''(t)+\frac{T^2(1+r^2)}{4\pi^2}\left(\varpi+\lambda-\frac{n^2}{r^2} \right)f_n(t)\right)
\cos(n\theta)\\
&+\sum_{n=1}^\infty  \left(g_n''(t)+\frac{T^2(1+r^2)}{4\pi^2}\left(\varpi+\lambda-\frac{n^2}{r^2} \right)g_n(t)\right) \sin(n\theta)=0.
\end{split}
\end{equation*}
Thus 
$$f_n''(t)+\frac{T^2(1+r^2)}{4\pi^2}\left(\varpi+\lambda-\frac{n^2}{r^2} \right)f_n(t)=0,$$
and the same holds for $g_n$. Since $f_n$ is $2\pi$-periodic, then 
$$ \frac{T^2(1+r^2)}{4\pi^2}\left(\varpi+\lambda-\frac{n^2}{r^2} \right)=m^2,$$
for some $m\in\n$ and $f_n(t)=f_{m,n}(t)=a_m\cos(m t)+b_m\sin(m t)$, $a_m,b_m\in\r$. 
Similar conclusions are obtained for   $g_n$. The eigenvalues $\lambda$ are 
 $$\lambda_{m,n}=\frac{4\pi^2 m^2}{T^2(1+r^2)}+\frac{(n^2-1)r^2+n^2 r^2}{r^2(1+r^2)}.$$ 
 Since $\lambda_{m,n}>0$ if $n\geq 1$, then   $0$ is an eigenvalue of $L$ if    $\lambda_{m,0}=0$ for some $m\in\n$. This occurs if and only if  $T=2\pi m$. Notice that $\ee_0$ is spanned by $\{\cos(mt),\sin(mt)\}$.

Let fix $T_0=2\pi$. Then $0$ is an eigenvalue if and only if $m=1$ and the  eigenspace $\ee_0$   is spanned by the eigenfunctions   $\cos (t)$ and $\sin(t)$. In order  to have $\ee_0$ of dimension $1$, we reduce the Banach space $X$ to the space of even functions of $ C^{2,\alpha}( \r/2\pi\z \times\s^1)$. This allows to discard the function $\sin(t)$, concluding that $\ee_0$ is spanned by $u_0(t,\theta)=\cos(t)$.  This proves that the condition (1) in Thm. \ref{t-bifu} is fulfilled.

Since  $  D_u \mathcal{F}(0,T_0)$ is a self adjoint elliptic operator,  standard arguments prove that  the rank of $  D_u \mathcal{F}(0,T_0)$ is $L^2$-orthogonal to $\ee_0$. Since this space is one-dimensional, the required  codimension is $1$.  This checks the condition (2) in Thm. \ref{t-bifu}.

Finally, for the third condition of Thm. \ref{t-bifu}, we need to check that   $D_T D_u \mathcal{F}(0,T_0)(u_0)\not\in \mbox{rank } D_u \mathcal{F}(0,T_0)$. We know that 
$$
D_T D_u \mathcal{F}(0,T_0)=-\frac{8\pi^2}{T_0^3}\frac{1}{1+r^2}\partial_t^2=-\frac{1}{\pi(1+r^2)}\partial_t^2.$$
Then
$$D_T D_u \mathcal{F}(0,T_0)(u_0) =  \frac{1}{\pi(1+r^2)}u_0=\frac{1}{\pi(1+r^2)}\cos t. $$
If  $D_T D_u \mathcal{F}(0,T_0)(u_0)\in \mbox{rank } D_u \mathcal{F}(0,T_0)$, then $D_T D_u \mathcal{F}(0,T_0)(u_0) $ is orthogonal to $u_0$, that is, 
$$\frac{1}{\pi(1+r^2)}\int_0^{2\pi}\int_{\s^1}  \cos^2t\, dt d\theta=0, $$
which it is not possible.

As conclusion of Thm. \ref{t-bifu}, we obtain a uniparametric family of   surfaces $C_r((\widetilde{w}_s)_T)$ bifurcating from the cylinder $C_R$, all of them with constant mean curvature $H_0$ and different from $C_R$.  Each of these surfaces $C_r((\widetilde{w}_s)_T)$  obtained by perturbing $C_R$ are parametrized by $t$ and independent of the parameter $\theta$. This proves that they are surfaces of revolution about the $z$-axis, the same rotation axis than $C_R$. We summarize these arguments in the following result. 
 
 \begin{theorem}\label{t-b} Let $C_R$ be a Killing cylinder of radius $R$. Then the cylinder $C_R$ bifurcates in a family of surfaces of revolution about the $z$-axis, all them having the same (constant) mean curvature   than $C_R$ and being $2\pi$-periodic in the $t$-parameter.  
 \end{theorem}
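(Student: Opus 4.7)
The plan is to apply the Crandall--Rabinowitz bifurcation theorem (Thm. \ref{t-bifu}) to the functional $\mathcal{F}(u,T)$ introduced in \eqref{ff}, whose zeros are exactly the normal graphs over $C_R$ with mean curvature $H_0$. The map $T\mapsto(0,T)$ parametrizes the trivial branch (the cylinder $C_R$ itself), and I seek a value $T_0$ at which a nontrivial branch emanates. The bifurcating solutions will be the desired surfaces.

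The first step is to fix the functional setting: take $X=C^{2,\alpha}(\r/2\pi\z\times\s^1)$ and $Y=C^{0,\alpha}(\r/2\pi\z\times\s^1)$. Next, compute $\mathrm{Ker}\,L$ for $L=D_u\mathcal{F}(0,T)$, using its explicit expression \eqref{l1}. Expanding $u\in X$ in a double Fourier series in $\theta\in\s^1$ and $t\in\r/2\pi\z$, the equation $Lu=0$ forces the mode-$(m,n)$ coefficient to satisfy $\tfrac{T^2(1+r^2)}{4\pi^2}\bigl(\varpi-\tfrac{n^2}{r^2}\bigr)=m^2$. Since $\varpi-n^2/r^2<0$ for $n\geq 1$, the only way $0$ lies in the spectrum is $n=0$, which forces $T=2\pi m$. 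Taking $T_0=2\pi$, the resulting kernel in $X$ is two-dimensional, spanned by $\cos t$ and $\sin t$. To reduce to a simple eigenvalue as required by hypothesis (1), restrict $X$ and $Y$ to the closed subspace of functions that are even in $t$; this eliminates $\sin t$ and leaves $\ee_0=\mathrm{span}\{\cos t\}=:\mathrm{span}\{u_0\}$.

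Hypothesis (2) follows from Fredholm theory: $L$ is a self-adjoint elliptic operator, so $\mathrm{rank}\,L$ is the $L^2$-orthogonal complement of $\mathrm{Ker}\,L$, hence of codimension one. For hypothesis (3), differentiate \eqref{l1} to obtain $D_TD_u\mathcal{F}(0,T_0)=-\tfrac{8\pi^2}{T_0^3(1+r^2)}\partial_t^2$, apply it to $u_0=\cos t$, and check that the resulting function, proportional to $\cos t$ itself, is not $L^2$-orthogonal to $u_0$; this reduces to the nonvanishing of $\int_0^{2\pi}\int_{\s^1}\cos^2 t\,dt\,d\theta$. With (1)--(3) verified, Thm. \ref{t-bifu} produces a $C^1$ curve $s\mapsto(w_s,t(s))$ of nontrivial zeros of $\mathcal{F}$ through $(0,2\pi)$, and hence a family of cmc surfaces $C_R((\widetilde w_s)_{t(s)})$ sharing the mean curvature $H_0$ with $C_R$.

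The step I expect to be the main obstacle is ensuring that the bifurcating surfaces are rotationally symmetric about the $z$-axis, since the perturbations $w_s$ produced by the theorem a priori depend on both $t$ and $\theta$. The cleanest workaround is to run the entire argument inside the $\theta$-independent subspace $X_0=\{u\in X:u=u(t)\}$, which is invariant under $\mathcal{F}$ because $\mathcal{F}$ is equivariant under rotations about the $z$-axis. Within $X_0$ (together with the even-in-$t$ restriction) the kernel is still spanned by $\cos t$, the codimension condition still holds, and the transversality calculation is unchanged; hence Crandall--Rabinowitz yields a bifurcation curve of $\theta$-independent perturbations. By the local uniqueness clause of Thm. \ref{t-bifu}, this curve coincides with the one obtained in the larger space, so the bifurcating surfaces are surfaces of revolution $2\pi$-periodic in $t$, as claimed.
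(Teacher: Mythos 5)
Your proposal is correct and follows essentially the same route as the paper: the same functional $\mathcal{F}$ from \eqref{ff}, the same Fourier computation of $\mathrm{Ker}\,L$ forcing $T_0=2\pi$, the same restriction to even functions to obtain a simple eigenvalue, and the same verification of the codimension and transversality hypotheses of Thm. \ref{t-bifu}. The one place you go beyond the paper is in justifying the $\theta$-independence of the bifurcating surfaces by running the argument in the rotation-invariant subspace and invoking the local-uniqueness clause; the paper merely asserts this independence, and your equivariance argument is a legitimate (indeed cleaner) way to close that step.
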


By uniqueness, we know that these surfaces belong necessarily to the Delaunay surfaces of $\h^3$. We point out that  $\lambda_{m,0}=0$ for some $m$  if and only if  $T=2m\pi$, $m\in\n$. Therefore in previous arguments, we   could have chosen a different value of $T$, say, $T=2m\pi$.   In such a case, the same argument proves  the existence of cmc rotational surfaces whose period is $T$. This   obviously gives Delaunay surfaces again, in fact the same surfaces of Thm. \ref{t-b} scaled by the factor of $m$.
 
\section*{Acknowledgements} Antonio Bueno has been partially supported by CARM, Programa Regional de Fomento de la Investigaci\'on, Fundaci\'on S\'eneca-Agencia de Ciencia y Tecnolog\'{\i}a Regi\'on de Murcia, reference 21937/PI/22.

 Rafael L\'opez  is a member of the IMAG and of the Research Group ``Problemas variacionales en geometr\'{\i}a'',  Junta de Andaluc\'{\i}a (FQM 325). This research has been partially supported by MINECO/MICINN/FEDER grant no. PID2020-117868GB-I00,  and by the ``Mar\'{\i}a de Maeztu'' Excellence Unit IMAG, reference CEX2020-001105- M, funded by 

MCINN/AEI/10.13039/501100011033/ CEX2020-001105-M.



\begin{thebibliography}{99}

 \bibitem{as} A. Ainouz, R. Souam, Stable capillary hypersurfaces in a half-space or a slab. Indiana Univ. Math. J. 65 (2016),  813--831.

 

\bibitem{bce} J. L. Barbosa,  M.  do Carmo,  J. Eschenburg. 
 Stability of hypersurfaces of constant mean
curvature in Riemannian manifolds.   Math. Z.   197 
(1988),  123--138.
 
 \bibitem{be} P. B\'erard,  Spectral Geometry: Direct and Inverse Problems, Lecture notes in Mathematics 1207, 1986.
 


\bibitem{blr} P.  B\'erard, L. Lopes de Lima,  W. Rossman, Index growth of hypersurfaces with constant mean curvature. Math. Z. 239 (2002), 99--115.  

\bibitem{bgm} M. Berger, P. Gauduchon, E. Mazet,   Le Spectre d'une Vari\'et\'e Riemannienne,  Lecture Notes in Math.
194, 1971.


 

\bibitem{cd} M. do Carmo, M. Dajczer,  Rotation hypersurfaces in spaces of constant curvature.  Trans.
Amer. Math. Soc. 277 (1983), 685--709.

\bibitem{csp} R. Chaves, M. da Silva, R.  Pedrosa,  A free boundary isoperimetric problem in hyperbolic 3-space between parallel horospheres. Pac. J. Math. 244 (2010), 1--21.


\bibitem{concus} P. Concus, R. Finn,    On the behavior of a capillary surface in a wedge.   Proc. Natl Acad. Sci. USA  63 (1969),  292--299.

\bibitem{cr}  M. Crandall, P. Rabinowitz,  Bifurcation from simple eigenvalues.  J. Functional Analysis 8 (1971),  321--340.

\bibitem{dev} B. Debyver, Index of the critical catenoid. Geom. Dedicata 199 (2019),  355--371.

 \bibitem{fr} L. Fel, B. Rubinstein, Stability of axisymmetric liquid bridges.  Z. Angew. Math. Phys. 66 (2015), 3447--3471. 


 
\bibitem{fs1} D. Fischer-Colbrie,  R. Schoen, The structure of complete stable minimal surfaces in 3-manifolds of nonnegative scalar curvature. Comm. Pure Appl. Math. 33 (1980),   199--211.
  \bibitem{fs}  A. Fraser, R. Schoen, The first Steklov eigenvalue, conformal geometry, and minimal surfaces. Adv.
Math. 226 (2011), 4011--4030.
  

     
   


  \bibitem{gwx} J. Guo, G. Wang, C. Xian, Stable capillary hypersurfaces supported on a horosphere in the hyperbolic space. Adv. Math.  409, Part A (2022),   108641 
  
 
\bibitem{koi} M. Koiso, Deformation and stability of surfaces with constant mean curvature. Tohoku Math. J. 54 (2002), 145--159.
 


\bibitem{lx} H.  Li,  C. Xiong,   A gap theorem for free boundary minimal surfaces in geodesic balls of hyperbolic space and hemisphere. J. Geom. Anal. 28 (2018), 3171--3182.
\bibitem{jl} M. Jleli, R. L\'opez,  Bifurcating nodoids in hyperbolic space.  Adv. Nonlinear Stud. 15 (2015),   849--865.

 
\bibitem{lo} R. L\'opez,  Symmetry of stationary hypersurfaces in hyperbolic space. Geom. Dedicata 119 (2006), 35--47. 



  

\bibitem{lo1}    R. L\'opez,   
 Bifurcation of cylinders for wetting and dewetting models with striped geometry. SIAM J. Math. Analysis 44 (2012),  946--965.
  
\bibitem{lo2}   R. L\'opez,   
 Stability and bifurcation of a capillary surface on a cylinder.  SIAM J. Appl. Math. 77  (2017), 108--127.

\bibitem{lo22} R. L\'opez, Stability of a capillary circular cylinder between two parallel cylinders to appear in SIAM J. Appl. Math. to appear.

\bibitem{lo3} R. L\'opez, An analysis of the Sturm-Liouville eigenvalue problem of a cylinder  between two spheres of equal radius, submitted.

\bibitem{maz} R. Mazzeo, F. Pacard, Bifurcating nodoids, Commemorating SISTAG, Contemporary Mathematics, American Mathematical Society, 314 (2002), 169--186.
  
  \bibitem{ni} J. Nitsche, Stationary partitioning of convex bodies. Arch. Ration. Mech. Anal. 89 (1985), 1--19.
 
    \bibitem{pl}  J. A. F. Plateau,   Statique exp\'erimentale et th\'eorique des liquides soumis aux seules forces mol\'eculaires. vol. 2. Gauthier-Villars, 2018.

 
\bibitem{ra}  J. W. S. Rayleigh,   On the instability of jets. Proc. London Math. Soc. 10 (1879), 4--13.

\bibitem{rs} A. Ros, R. Souam, On stability of capillary surfaces in a ball. Pacific J. Math. 178 (1997),  345--361.

\bibitem{rv} A. Ros, E. Vergasta, Stability for hypersurfaces of constant mean curvature with free boundary. 
Geom. Dedic. 56 (1995), 19--33.

\bibitem{ss} F. Schlenk, P. Sicbaldi, Bifurcating extremal domains for the first eigenvalue of the Laplacian. Adv. Math. 229 (2012), 602--632.
 
 \bibitem{so} R. Souam, On stability of stationary hypersurfaces for the partitioning problem for balls in space forms. Math. Z. 224 (1997),  195--208.
 
 
 \bibitem{sz} P. Sternberg, K. Zumbrun, A Poincar\'e inequality with applications to volume-constrained area-minimizing surfaces. J. Reine Angew. Math. 503 (1998), 63--85.
 
  \bibitem{tr} H. Tran, Index characterization for free boundary minimal surfaces.  Comm. Anal. Geom. 28  (2020), 189--222.

 \bibitem{tz1} H. Tran, D. Zhou,
On the Morse index with constraints: An abstract formulation. J. Math. Anal. Appl.   526 (2023), 127317.

\bibitem{tz2} H. Tran, D. Zhou,  On the Morse index with constraints for capillary surfaces. J. Geom. Anal.  33   (2023).  Article number: 110.



  
  
  
  



 

\bibitem{vo01} T. I.  Vogel, Stability of a liquid drop trapped between two parallel planes. 
SIAM J. Appl. Math. 47 (1987),  516--525. 
 

 

\bibitem{vo1} T. I. Vogel, Non-linear stability of a certain capillary surface. Differential equations and dynamical systems (Waterloo, ON, 1997). Dynam. Contin. Discrete Impuls. Systems 5 (1999),  1--15.

\bibitem{vo12} T. I. Vogel, Sufficient conditions for capillary surfaces to be energy minima. Pac. J. Math. 194 (2000),  469--489.

\bibitem{vo2} T. I. Vogel, Comments on radially symmetric liquid bridges with inflected profiles. Discrete Contin. Dyn. Syst. 2005, suppl., 862--867.

\bibitem{vo44} T. I. Vogel, Liquid bridges between balls: the small volume instability. J. Math. Fluid Dynam. 15 (2013), 397--413.

\bibitem{vo5} T. I. Vogel, Liquid bridges between contacting balls. J. Math. Fluid Mech. 16 (2014),  737--744. 

  \bibitem{vo4}    T. I.  Vogel,    Capillary surfaces in circular cylinders.  J. Math. Fluid Mech. 23 (2021),  Paper No. 68, 13 pp. 
  
   \bibitem{wx}  G. Wang, C. Xia, Uniqueness of stable capillary hypersurfaces in a ball. Math. Ann. 374 (2019),
1845--1882.

\bibitem{we-c2} H. C. Wente,  A surprising bubble catastrophe.  Pac. J. Math. 189 (1999), 339--375.

  \bibitem{zz} X. Zhou, F.  Zhang,    Bifurcation of a partially immersed plate between two parallel plates.   J. Fluid Mech. 817 (2017), 122--137.

\end{thebibliography}
\end{document}